\definecolor{dark-red}{rgb}{0.6,0,0}
\definecolor{dark-green}{rgb}{0,0.4,0}
\definecolor{medium-blue}{rgb}{0,0,0.5}
\newcommand{\colim}{\mathrm{colim}}
\newcommand{\mc}[1]{\mathcal{#1}}
\newcommand{\mbb}[1]{\mathbb{#1}}
\newcommand{\mr}[1]{\mathrm{#1}}
\newcommand{\mf}[1]{\mathfrak{#1}}
\newcommand{\ur}{\mathrm{ur}}
\newcommand{\et}{\mathrm{\acute{e}t}}
\newcommand{\dR}{\mathrm{dR}}
\newcommand{\qlog}{\mathrm{qlog}} 
\newcommand{\can}{\mathrm{can}}
\DeclareMathOperator{\Lie}{Lie}
\DeclareMathOperator{\Spec}{Spec}
\DeclareMathOperator{\Spf}{Spf}
\DeclareMathOperator{\Gal}{Gal}
\DeclareMathOperator{\Hom}{Hom}
\DeclareMathOperator{\RigExt}{RigExt}
\DeclareMathOperator{\Red}{Red}
\newcommand{\pdiv}{p\mathrm{-div}}
\newcommand{\crys}{\mr{crys}}
\newcommand{\Ext}{\mr{Ext}}
\numberwithin{equation}{subsection}
\theoremstyle{plain}
\newtheorem*{theorem*}{Theorem}
\newtheorem{theorem}[subsection]{Theorem}
\newtheorem{corollary}[subsection]{Corollary}
\newtheorem{proposition}[subsection]{Proposition}
\newtheorem{lemma}[subsection]{Lemma}
\theoremstyle{definition}
\newtheorem{example}[subsection]{Example}
\newtheorem{definition}[subsection]{Definition}
\newtheorem{remark}[subsection]{Remark}
\newtheorem{question}[subsection]{Question}
\theoremstyle{remark}
\begin{document}

\title{The conjugate uniformization via 1-motives}

\begin{abstract} 
We use the $p$-divisible group attached to a 1-motive to generalize the conjugate $p$-adic uniformization of Iovita--Morrow--Zaharescu to arbitrary  $p$-adic formal semi-abelian schemes or $p$-divisible groups over the ring of integers in a $p$-adic field. This mirrors a mixed Hodge theory construction of the inverse uniformization map for complex semi-abelian varieties. We also highlight the geometric structure of the target of the conjugate uniformization map, which is an \'{e}tale cover of a negative Banach--Colmez space in the sense of Fargues--Scholze.  
\end{abstract}

\author{Sean Howe}
\address{Department of Mathematics, University of Utah, Salt Lake City UT 84112}
 \email{sean.howe@utah.edu}
 \author{Jackson S. Morrow}
\address{Department of Mathematics, University of California, Berkeley, Berkeley CA 94720}
 \email{jacksonmorrow@berkeley.edu}
 \author{Peter Wear}
\address{Department of Mathematics, University of Utah, Salt Lake City UT 84112}
 \email{peter.wear@utah.edu}
 \thanks{During the preparation of this article, J.S.M was partially supported by NSF RTG grant DMS-1646385 and later supported by NSF MSPRF grant DMS-2202960. P.W. was supported by NSF RTG grant DMS-1840190.}

\maketitle 
\tableofcontents


\section{Introduction, statement of results, and context}

Let $K$ be a $p$-adic field; that is, a complete discretely valued extension of $\mbb{Q}_p$ with perfect residue field $\kappa$. Let $\overline{K}$ be an algebraic closure of $K$ and let $C$ be the completion of $K$. We write $K^\ur \subset \overline{K}$ for the maximal unramified extension 
and $\overline{\kappa}$ for the common residue field of $K^\ur$, $\overline{K}$, and $C$. For $G$ a $p$-divisible group or $p$-adic formal semi-abelian scheme over $\mc{O}_K$, we may evaluate points on a $p$-adically complete  $\mc{O}_K$-algebra $R$ by $G(R):=\lim_k G(R/p^k)$ (for a $p$-divisible group, the points \`{a} la Tate \cite[\S2.4]{tate:p-divisible-groups}). We write $G[p^\infty]$ for the $p$-divisible group, $T_p G = \lim_n G[p^n]$ for the Tate module, and $V_p G=T_p G \otimes \mbb{Q}_p$; the latter two are functors on $\mc{O}_K$-algebras (since even in the formal case $G[p^n]$ promotes automatically to a scheme over $\mc{O}_K$), but if we do not include an argument they are evaluated on $\overline{K}$ and equipped with the action of $\Gal(\overline{K}/K)$. 

For $G$ as above,  we have the functorial Hodge--Tate exact sequence \cite[\S4]{tate:p-divisible-groups} 
\[ 0 \rightarrow \Lie G\otimes_{\mc{O}_K} C(1) \rightarrow T_p G \otimes_{\mbb{Z}_p}C \rightarrow  \omega_{G[p^\infty] ^\vee} \otimes_{\mc{O}_K} C \rightarrow 0. \]
It depends only on $G[p^\infty]$, and by \cite[Corollary 2]{tate:p-divisible-groups} there is a unique Galois equivariant splitting 
\begin{equation}\label{eq.tate-splitting} T_p G \rightarrow \Lie G \otimes_{\mc{O}_K} C(1). \end{equation}
As in \cite[Theorem A]{imz}, the kernel of \cref{eq.tate-splitting} is $T_p G(\mc{O}_{K^\ur})$, i.e.~the Tate module of the maximal \'{e}tale $p$-divisible subgroup of $G$:~this is evidently contained in the kernel by functoriality, but the kernel is crystalline as a sub-representation of $T_p G$ and of Hodge--Tate weight zero by definition, thus unramified by a standard result in $p$-adic Hodge theory\footnote{To wit, if the Hodge--Tate weights are zero then the filtration on the associated filtered isocrystal is trivial, thus it corresponds to an unramified Galois representation under Fontaine's equivalence. See \cite[Appendix A]{imz} for another argument.}.

We will usually assume that $T_p G(K^\ur)=0$, equivalently that $G$ does not contain a non-trivial \'{e}tale $p$-divisible subgroup, equivalently that \cref{eq.tate-splitting} is injective. This condition is invariant under extension of $p$-adic fields and is not too serious, as, e.g., in the $p$-divisible case one can just quotient by the maximal \'{e}tale $p$-divisible subgroup to reduce to this setting. However, it simplifies statements considerably (see \cref{lemma.p-div-field-of-def} for why!). 

In this work, we construct an integration homomorphism  
\[ \overline{I}\colon G(\mc{O}_{K}) \rightarrow (\Lie G \otimes_{\mc{O}_K} C(1))/ T_p G, \]
by imitating with $p$-divisible groups a construction of the inverse of the uniformization map for a complex semi-abelian variety via the extensions of mixed Hodge structures attached to 1-motives. The map $\overline{I}$ is functorial and compatible with extensions $K \subseteq K'\subseteq C$ of $p$-adic fields. Our main result is
\begin{theorem}\label{theorem.main} Let $K$ be a $p$-adic field and let $G$ be a $p$-divisible group or $p$-adic formal semi-abelian scheme over $\mc{O}_K$. If $T_pG(K^\ur)=0$, then
\begin{enumerate}
\item  $\mr{Ker}(\overline{I})=G(\mc{O}_K)^{\pdiv}$, the subgroup of $p$-divisible elements (see \cref{defn:pdiv}). 
\item $\overline{I}(G[p^\infty](\mc{O}_K))$ is the set of $y \in (\Lie G \otimes_{\mc{O}_K} C(1))/ T_p G$ such that 
\begin{enumerate}
\item $y$ is stabilized by $\Gal(\overline{K}/K)$, 
\end{enumerate}
and, for $V_y := T_y \otimes \mbb{Q}_p$ where $T_y$ is the extension of $\mbb{Z}_p$ by $T_p G$ obtained by pulling back along $\mbb{Z}_p \rightarrow (\Lie G \otimes_{\mc{O}_K} C(1))/T_p G$, $1 \mapsto y$, 
\begin{enumerate}
\item[(b)] $V_y$ is a crystalline representation of $\Gal(\overline{K}/K)$, and
\item[(c)] the maximal unramified quotient of $V_y$ is a split extension of $\mbb{Q}_p$ by $V_p G(\overline{\kappa})$ as a $\Gal(\overline{\kappa}/\kappa)$-representation. 
\end{enumerate}
\end{enumerate}
\end{theorem}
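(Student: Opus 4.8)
The plan is to reduce both assertions to the study of the Kummer map, using the description of $\overline{I}$ that comes out of the construction: to $x \in G(\mc{O}_K)$ one attaches the $p$-divisible group $M_x$ over $\mc{O}_K$ fitting into
\[ 0 \rightarrow G[p^\infty] \rightarrow M_x \rightarrow \mbb{Q}_p/\mbb{Z}_p \rightarrow 0, \]
so that $T_x := T_p M_x$ is an extension of $\mbb{Z}_p$ by $T_p G$, its Hodge--Tate splitting $r_x \colon T_x \rightarrow \Lie G \otimes_{\mc{O}_K} C(1)$ restricts to \cref{eq.tate-splitting} on $T_p G$, and $\overline{I}(x)$ is the image under $r_x$ of a lift of $1$; the hypothesis $T_p G(K^\ur) = 0$ is used precisely to make the target a quotient by a lattice. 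Since $H^i(K, C(1)) = 0$ for $i = 0, 1$ by Tate (hence also $H^i(K, \Lie G \otimes_{\mc{O}_K} C(1)) = 0$), the connecting map of
\[ 0 \rightarrow T_p G \rightarrow \Lie G \otimes_{\mc{O}_K} C(1) \rightarrow (\Lie G \otimes_{\mc{O}_K} C(1))/T_p G \rightarrow 0 \]
is an isomorphism $\partial$ from $\big((\Lie G \otimes_{\mc{O}_K} C(1))/T_p G\big)^{\Gal(\overline{K}/K)}$ onto $H^1(\Gal(\overline{K}/K), T_p G)$; the morphism of extensions furnished by $r_x$ identifies $\partial(\overline{I}(x))$ with the class $[T_x]$, equivalently with the image $\delta(x)$ of $x$ under the Kummer map $\delta \colon G(\mc{O}_K) \rightarrow H^1(\Gal(\overline{K}/K), T_p G)$. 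In particular $\overline{I}(x)$ is automatically $\Gal(\overline{K}/K)$-stable, and, via $\partial$, $\overline{I}$ is identified with $\delta$; from here on everything is a statement about $\delta$.

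For (1): $\overline{I}(x) = 0$ if and only if $T_x$ splits $\Gal(\overline{K}/K)$-equivariantly. I would then match this with \cref{defn:pdiv} by Kummer theory over $\mc{O}_K$: using $p$-adic completeness of $G(\mc{O}_K)$, vanishing of the relevant fppf cohomology of the smooth (formal) group, and injectivity of $H^1_{\et}(\mc{O}_K, G[p^n]) \hookrightarrow H^1(\Gal(\overline{K}/K), G[p^n])$, one gets $x \in p^n G(\mc{O}_K)$ iff the reduction of $T_x$ modulo $p^n$ splits; passing to the limit (the inverse system of fibers of $p^n$-division is Mittag--Leffler since each $G[p^n](\mc{O}_K)$ is finite) identifies $\ker(\overline{I})$ with $\bigcap_n p^n G(\mc{O}_K) = G(\mc{O}_K)^{\pdiv}$.

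For the inclusion ``$\overline{I}(G[p^\infty](\mc{O}_K)) \subseteq \{y : \text{(a)--(c)}\}$'': for $x \in G[p^\infty](\mc{O}_K)$ the group $M_x$ is an honest $p$-divisible group over $\mc{O}_K$, so $V_y = V_x = V_p M_x$ is crystalline with Hodge--Tate weights in $\{0,1\}$ --- this is (b), and (a) is automatic by the above. For (c), the maximal unramified quotient of $V_p M_x$ is $V_p$ of its maximal \'etale quotient $M_x^{\et}$, which the connected--\'etale sequence identifies with an extension of $\mbb{Q}_p/\mbb{Z}_p$ by the maximal \'etale quotient of $G[p^\infty]$ (whose base change to $\overline{\kappa}$ has Tate module $T_p G(\overline{\kappa})$); this extension is classified by a point of the torsion group $G[p^\infty]^{\et}(\mc{O}_K)$, so it splits after inverting $p$.

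The hard part will be the reverse inclusion: given $y$ satisfying (a)--(c), produce $x \in G[p^\infty](\mc{O}_K)$ with $\overline{I}(x) = y$. My plan: by (b), $V_y$ is crystalline with Hodge--Tate weights in $\{0,1\}$, hence by the classification of $p$-divisible groups over $\mc{O}_K$ by crystalline representations (Fontaine's conjecture, after Kisin) it equals $V_p M_0$ for a $p$-divisible group $M_0$, well-defined up to isogeny; since $T_y$ is a $\Gal(\overline{K}/K)$-stable lattice in $V_y$, it is $T_p M$ for an honest $p$-divisible group $M$ over $\mc{O}_K$, obtained from $M_0$ by a quotient by a finite flat subgroup (finite subgroups of $p$-divisible groups over $\mc{O}_K$ being automatically finite flat, by Tate and Raynaud). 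Full faithfulness of the Tate module functor on $p$-divisible groups over $\mc{O}_K$ turns $T_p G \hookrightarrow T_y$ into an embedding $G[p^\infty] \hookrightarrow M$ whose cokernel is a height-one \'etale $p$-divisible group of trivial Tate module, hence $\mbb{Q}_p/\mbb{Z}_p$, so $M = M_x$ for a unique $x \in G[p^\infty](\mc{O}_K)$ via $\Ext^1_{\mc{O}_K}(\mbb{Q}_p/\mbb{Z}_p, G[p^\infty]) \cong G[p^\infty](\mc{O}_K)$. Finally $\partial(\overline{I}(x)) = [T_x] = [T_p M] = [T_y] = \partial(y)$ with $\partial$ injective, so $\overline{I}(x) = y$; condition (c) is what should guarantee that the reconstruction is compatible with the connected--\'etale structure of $G$ and that $x$ lands in $G[p^\infty](\mc{O}_K)$ rather than merely $G(\mc{O}_K)$. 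The principal difficulty is this $p$-adic-Hodge-theoretic reconstruction: combining Kisin's theorem, Tate's full faithfulness, and Raynaud's theory of finite flat group schemes to realize the given data by an actual $p$-divisible group with the correct integral structure.
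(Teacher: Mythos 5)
Your route through the Kummer map gives a genuinely different and rather clean proof of part (1). Identifying $\partial(\overline{I}(x))$ with the class $[T_x]\in H^1(K, T_pG)$ via Tate's vanishing ($H^i(K,C(1))=0$ for $i=0,1$) reduces the kernel computation to deciding when $T_x$ splits Galois-equivariantly; since a Galois-invariant element of $T_p M_x(\mc{O}_{\overline K})$ lies in $T_p M_x(\mc{O}_K)$, and a section over $\mbb{Z}_p$ is precisely a compatible system of $p$-power roots of $-x$ in $G(\mc{O}_K)$, splitting of $T_x$ is equivalent to $p$-divisibility of $x$. (You do not actually need the detour through fppf cohomology and Mittag--Leffler; the universal-cover description already does the job.) The paper instead proves injectivity of $\psi$ and computes $\ker(\mc{E})$ after base change to $\mc{O}_{(K^\ur)^\wedge}$, then descends via \cref{lemma.p-div-field-of-def}; your argument stays over $K$ throughout. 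Your treatment of the forward inclusion in (2) is also correct.

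The reverse inclusion in part (2), however, has a genuine gap. You invoke an isomorphism $\Ext^1_{\mc{O}_K}(\mbb{Q}_p/\mbb{Z}_p, G[p^\infty]) \cong G[p^\infty](\mc{O}_K)$ to conclude $M=M_x$; this isomorphism is false. The natural map runs the other way, $G[p^\infty](\mc{O}_K)\to \Ext^1(\mbb{Q}_p/\mbb{Z}_p, G[p^\infty])$, and it is in general neither injective (kernel the $p$-divisible points) nor surjective. By \cref{theorem.rig-ext-iso}, what is true is that $G[p^\infty](\mc{O}_K)=\Hom(\mbb{Z}_p, G[p^\infty])$ classifies \emph{rigidified} extensions (\cref{defn:rigidified}), so the image of the map to $\Ext$ consists precisely of rigidifiable extensions. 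Thus, after you produce the $p$-divisible group $M$ from $T_y$ via Kisin, full faithfulness, and Raynaud (all correct), the actual content is showing that $M$ admits a rigidification as an extension of $\mbb{Q}_p/\mbb{Z}_p$ by $G[p^\infty]$. This is exactly where condition (c) enters: by \cref{theorem.canonical-rigidifications}(2), over a perfect residue field a rigidification amounts to a splitting in the isogeny category of the induced extension of $\mbb{Q}_p/\mbb{Z}_p$ by $H_\kappa^{\et}$ over $\kappa$, and applying $V_p(\cdot)(\overline\kappa)$ translates this into a $\Gal(\overline\kappa/\kappa)$-equivariant splitting of the maximal unramified quotient of $V_y$, i.e.\ condition (c). You write that ``condition (c) is what should guarantee'' this, but offer no argument; that implication is the heart of the reverse inclusion and the paper devotes \cref{s.rig-ext} to it. As written, your part (2) is incomplete at exactly this point.
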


We make some clarifying remarks on the statement of \cref{theorem.main}.
\begin{remark}\label{remark.clarifying-statements}
\begin{enumerate}
\item []
\item Because $T_p G(\mc{O}_K)=0$, any $p$-divisible 
$\mc{O}_K$-point is uniquely $p$-divisible, and the Fontaine construction (\cref{example.fontaine-p-adic}) induces 
\[ \Hom(\mbb{Z}[1/p], G(\kappa))=\Hom(\mbb{Z}[1/p], G(\mc{O}_K))=G(\mc{O}_K)^{\pdiv}. \]
In particular, the $p$-divisible elements are insensitive to purely ramified extension. Note that the prime-to-$p$ torsion, which is in bijection by reduction with prime-to-$p$ torsion in $G(\kappa)$, is always $p$-divisible. From this it also follows that if $G(\kappa)/G[p^\infty](\kappa)$ admits a set of $p$-divisible representatives in $G(\kappa)$ then $\overline{I}(G[p^\infty](\mc{O}_{K}))=\overline{I}(G(\mc{O}_K))$ --- this occurs, e.g., when $\kappa$ is algebraically closed or finite. 
\item The crystalline condition (b) is necessary --- for example, when $G=\mu_{p^\infty}$, the non-crystalline Tate module of the semistable elliptic curve $\mbb{G}_m^{\mr{an}}/p^\mbb{Z}$ can be obtained from a $y$ satisfying (a) and trivially (c).  
\item The condition (c) is automatically verified for $G$ such that  \[ \mr{Ext}_{\Gal(\overline{\kappa}/\kappa)}(\mbb{Q}_p, V_p G(\overline{\kappa}))=0.\] For example, it suffices that $\kappa$ be algebraically closed, or that $G[p^\infty]_\kappa$ be connected, or that $G_\kappa$ be a semi-abelian scheme and $\kappa$ be finite (see \cref{corollary.imz} below). 
 \end{enumerate}
\end{remark}

\begin{corollary}\label{corollary.imz} 
Suppose $[K:\mbb{Q}_p]<\infty$,  
$G$ is a $p$-adic formal semi-abelian scheme over $\mc{O}_K$, and $T_p G(\mc{O}_{K^\ur})=0$. Then $\overline{I}$ factors as the projection
\begin{equation}\label{eq.cor-tors-decomp} G(\mc{O}_{K})=G[p^\infty](\mc{O}_{K}) \times G(\mc{O}_{K})^{\textrm{prime-to-$p$ torsion}} \rightarrow  G[p^\infty](\mc{O}_{K}) \end{equation}
composed with the injection $\overline{I}\colon G[p^\infty](\mc{O}_{K}) \hookrightarrow (\Lie G \otimes_{\mc{O}_K} C(1)) / T_p G$. The image consists of the points $y$ stabilized by $\Gal(\overline{K}/K)$ with $V_y$  crystalline (notation as in the statement of \cref{theorem.main}). 
\end{corollary}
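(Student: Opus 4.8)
The plan is to deduce \cref{corollary.imz} from \cref{theorem.main} and \cref{remark.clarifying-statements}; the only genuinely new ingredient needed is a structural description of the group $G(\mathcal{O}_K)$, after which the statement is bookkeeping.

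First I would pin down that structure. Since $[K:\mathbb{Q}_p]<\infty$ the residue field $\kappa$ is finite, so $G_\kappa$ is a semi-abelian scheme over a finite field and $G(\kappa)$ is finite. The hypothesis $T_p G(\mathcal{O}_{K^\ur})=0$ says exactly that the maximal \'{e}tale $p$-divisible subgroup of $G$ is trivial, i.e.\ $G[p^\infty]$ is connected; it is therefore the formal Lie group of $G$, and (since every formal-group point over $\mathcal{O}_K/p^k$ is $p$-power torsion) its points \`{a} la Tate over $\mathcal{O}_K$ are just the formal-group points, so $G[p^\infty](\mathcal{O}_K)=\ker\big(G(\mathcal{O}_K)\to G(\kappa)\big)$, a pro-$p$ group (a limit of finite $p$-groups, using $[K:\mathbb{Q}_p]<\infty$). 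Connectedness of $G_\kappa[p^\infty]$ also forces $G(\kappa)$ to have no $p$-torsion, so $n:=\#G(\kappa)$ is prime to $p$. As $G$ is smooth, reduction yields a short exact sequence $0\to G[p^\infty](\mathcal{O}_K)\to G(\mathcal{O}_K)\to G(\kappa)\to 0$ on which multiplication by $n$ is an isomorphism of the (pro-$p$) kernel and is zero on $G(\kappa)$; hence this sequence splits canonically, with complement the $n$-torsion, which is exactly the prime-to-$p$ torsion subgroup. This produces the decomposition \cref{eq.cor-tors-decomp} and identifies the projection onto $G[p^\infty](\mathcal{O}_K)$ with the quotient by the prime-to-$p$ torsion.

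Next, for the factorization I would invoke \cref{theorem.main}(1) together with \cref{remark.clarifying-statements}(1): $\ker(\overline{I})=G(\mathcal{O}_K)^{\pdiv}=\Hom(\mathbb{Z}[1/p],G(\kappa))$, which, $G(\kappa)$ being finite, is its prime-to-$p$ part, i.e.\ (under reduction) the prime-to-$p$ torsion subgroup of $G(\mathcal{O}_K)$ occurring in \cref{eq.cor-tors-decomp}. So $\overline{I}$ kills that subgroup and factors through the projection onto $G[p^\infty](\mathcal{O}_K)$; the induced map is the restriction of $\overline{I}$ to $G[p^\infty](\mathcal{O}_K)$, and it is injective because $\ker(\overline{I})\cap G[p^\infty](\mathcal{O}_K)=0$, the latter group being pro-$p$ and so meeting the prime-to-$p$ torsion trivially.

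Finally, to identify the image I would show that conditions (b) and (c) of \cref{theorem.main}(2) reduce to ``$V_y$ is crystalline''. Since $G[p^\infty]$ is connected, so is $G_\kappa[p^\infty]$, whence $V_p G(\overline{\kappa})=0$; condition (c) is then automatic by \cref{remark.clarifying-statements}(3) (invoking either that $G[p^\infty]_\kappa$ is connected or that $G_\kappa$ is a semi-abelian scheme with $\kappa$ finite). Concretely, for crystalline $V_y$ the extension $0\to V_pG\to V_y\to\mathbb{Q}_p\to 0$ together with the fact that $V_pG$ --- crystalline with Dieudonn\'{e} isocrystal of only positive Frobenius slopes --- admits no nonzero unramified quotient forces the maximal unramified quotient of $V_y$ to be the trivial $\mathbb{Q}_p$, that is, the split extension of $\mathbb{Q}_p$ by $V_pG(\overline{\kappa})=0$. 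Combining with the factorization, $\overline{I}(G(\mathcal{O}_K))=\overline{I}(G[p^\infty](\mathcal{O}_K))$ is precisely the set of $\Gal(\overline{K}/K)$-stable $y$ with $V_y$ crystalline. The only step calling for any real thought is the structural one in the second paragraph --- identifying $G[p^\infty](\mathcal{O}_K)$ with the kernel of reduction and splitting off the prime-to-$p$ torsion --- and even this is routine, since the substantive content of the corollary is already contained in \cref{theorem.main}; so I foresee no serious obstacle.
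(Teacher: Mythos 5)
Your proof rests on the claim that $T_p G(\mc{O}_{K^\ur})=0$ implies $G[p^\infty]$ is connected. This is false, and the error propagates through essentially every step. The condition $T_p G(\mc{O}_{K^\ur})=0$ says only that $G$ has no nontrivial \'{e}tale $p$-divisible \emph{sub}group over $\mc{O}_{K^\ur}$; it does not force the \'{e}tale \emph{quotient} in the connected--\'{e}tale sequence to vanish, because over $\mc{O}_K$ (unlike over the perfect field $\kappa$) that sequence need not split. The canonical counterexample is a non-canonical lift to $\mc{O}_K$ of an ordinary elliptic curve $E/\mbb{F}_p$: the connected--\'{e}tale sequence of $E[p^\infty]$ over $\mc{O}_{K^\ur}$ is non-split for a generic Serre--Tate parameter, so $T_p E(\mc{O}_{K^\ur})=0$, yet $E_\kappa[p^\infty]$ has a nontrivial \'{e}tale part and $E(\kappa)$ may very well have $p$-torsion (whenever $p \mid p+1-a_p$). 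The paper's own \cref{example.fontaine-p-adic} makes exactly this point for $\kappa$ algebraically closed.

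As a result: your identification $G[p^\infty](\mc{O}_K)=\ker(\Red)$ is wrong in general (the correct statement, which the paper uses, is $G[p^\infty](\mc{O}_K)=\Red^{-1}(G(\kappa)[p^\infty])$, which is strictly larger when $G(\kappa)$ has $p$-torsion); your claim that $\#G(\kappa)$ is prime to $p$ is wrong; your claim $V_p G(\overline{\kappa})=0$ is wrong, so your ``positive Frobenius slopes'' argument for (c) does not apply. The decomposition \cref{eq.cor-tors-decomp} must instead be obtained as the paper does, by splitting the finite abelian group $G(\kappa)$ into its $p$-primary and prime-to-$p$ parts and pulling this back along $\Red$. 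And the proof that (c) is superfluous must use the Weil-number argument: Frobenius acting on $V_p G(\overline{\kappa})$ has no eigenvalue $1$ (an eigenvalue $1$ would produce a positive-rank Frobenius-fixed submodule of $T_p G(\overline{\kappa})$, hence infinitely many points of $G(\kappa)$), so $H^1(\Gal(\overline{\kappa}/\kappa), V_p G(\overline{\kappa}))=0$ and any unramified extension of $\mbb{Q}_p$ by $V_p G(\overline{\kappa})$ splits. The rest of your bookkeeping — applying \cref{theorem.main}(1) to identify the kernel with the prime-to-$p$ torsion, and restricting to $G[p^\infty](\mc{O}_K)$ — is fine once these two points are repaired.
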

\begin{proof}
Note the decomposition in \cref{eq.cor-tors-decomp} is immediate from the following properties of $G$ and the reduction map $\Red\colon G(\mc{O}_K)\rightarrow G(\kappa)$: 
\begin{enumerate}
\item $G(\kappa)$ is a finite abelian group.
\item The map $\Red$ induces an isomorphism
 \[ G(\mc{O}_{K})^{\textrm{prime-to-$p$ torsion}}\cong G(\kappa)^{\textrm{prime-to-$p$ torsion}}\]
\item $G[p^\infty](\mc{O}_{K}) = \Red^{-1}(G[p^\infty](\kappa))=\Red^{-1}(G(\kappa)[p^\infty]).$	
\end{enumerate}
Now, as noted in \cref{remark.clarifying-statements}-(1), reduction induces  
\[ G(\mc{O}_K)^{\pdiv}=\Hom(\mbb{Z}[1/p], G(\mc{O}_{K}))=\Hom(\mbb{Z}[1/p], G(\kappa)). \]
Since $G(\kappa)$ is a finite abelian group, the right-hand-side is identified by evaluation at $1$ with the prime-to-$p$ torsion in $G(\kappa)$, thus the left-hand side is also prime-to-$p$ torsion. Since every prime-to-$p$ torsion point is $p$-divisible,
\[  G(\mc{O}_K)^{\pdiv}=G(\mc{O}_{K})^{\textrm{prime-to-$p$ torsion}}. \]
It remains only to show that in this case condition (c) in the characterization of the image in \cref{theorem.main} is superfluous. But this follows since here the Galois representation is determined by a single matrix corresponding to Frobenius, but the sub-representation $V_p G(\kappa)$ does not have $1$ as an eigenvalue (since otherwise there would be a non-trivial fixed vector giving rise to infinitely many points in $G(\kappa)$), thus the extension is split. 
\end{proof}

We show in \cref{s.integration-universal-cover-other-constructions} that $\overline{I}$ can also be constructed using Fontaine integration \cite{fontaine:integral}, so that it agrees with the map studied in \cite{imz}. \cref{corollary.imz} thus generalizes the main result of \cite{imz}, which treats the case where $G$ is the base change to $\mc{O}_K$ of a good reduction abelian variety over a finite unramified extension of $\mbb{Q}_p$.  
Indeed, the main motivation for this work was to give a simpler and more general construction of the uniformization of loc.~cit.~rendering its key properties evident via the theory of $p$-divisible groups.  Beyond extending the field of definition, our generality allows for, e.g., 
\begin{enumerate}
\item the connected component of the N\'{e}ron model of an arbitrary abelian variety (after finite extension to obtain semi-abelian reduction), and
\item non-algebraizable good reduction abeloid varieties.  
\end{enumerate}

We refer to the characterization of the $\mc{O}_{K}$-points of certain $p$-divisible groups described in \cref{corollary.imz} and \cref{remark.p-div-unif} as the \emph{conjugate uniformization} because the splitting of the Hodge--Tate filtration is an analog of the conjugate filtration in complex Hodge theory (and to distinguish it from the Scholze--Weinstein uniformization, recalled below in \cref{s.geometry}). 

\begin{remark}\label{remark.p-div-unif} 
In \cite{imz}, the trivial decomposition of \cref{eq.cor-tors-decomp} into the points of the $p$-divisible group and the prime-to-$p$ torsion is not clearly described, so that one factor is never identified as being the points of the $p$-divisible group. As a result, in \cite{imz}, this uniformization is presented as a result about abelian varieties. By contrast, we wish to emphasize that this uniformization is purely a result about $p$-divisible groups. The identification of $G[p^\infty](\mc{O}_{K})$ with the set of crystalline points in $(\Lie G \otimes_{\mc{O}_K} C(1)) / T_p G$ is formulated entirely in the world of $p$-divisible groups over $\mathcal{O}_K$ --- the source, the map $\overline{I}|_{G[p^\infty](\mc{O}_{K})}$, and the target all only depend on $G[p^\infty]$. Indeed, the only place we use the semi-abelian scheme in the proof of \cref{corollary.imz} is to conclude that $G[p^\infty](\kappa)$ has finitely many points. Thus this part of the corollary holds for any $p$-divisible group $H/\mc{O}_K$ with the same properties, which can be formulated by requiring that $T_p H(K^\ur)=0$ and $T_p H(\kappa)= 0$. 
\end{remark}

\begin{remark}\label{remark.intro-continuity}
In \cite{imz} an emphasis is placed on the continuity properties of $\overline{I}$ on $G(\mc{O}_{\overline{K}})$ --- this aspect did not appear in \cref{theorem.main} or \cref{corollary.imz} because it plays no role in the construction and because we have stated these results using points in a single $p$-adic field while the subtle topological issue in question apears only if we consider all $\mc{O}_{\overline{K}}$-points at once. See \cref{remark:continuity} for further discussion.
\end{remark}

Before describing our construction of $\overline{I}$ and explaining how it leads to a proof of \cref{theorem.main}, we take a brief detour to explain the analogous construction of the inverse uniformization map for complex abelian varieties.

\subsection{Uniformization of complex semi-abelian varieties}
\label{ss.uniformization}
Let $G$ be a semi-abelian variety over $\mbb{C}$, that is, an extension of an abelian variety by a torus. Via exponentiation, we obtain a uniformization 
\[ \exp\colon \Lie G/ H_1(G(\mbb{C}), \mbb{Z}) \rightarrow G(\mbb{C}). \]
The inverse can be constructed by integration:~if we identify $\Lie G$ with the dual to the space of invariant differentials on $G$, then $x \in G(\mbb{C})$ maps to
\[ \overline{I}_{\mbb{C}}(x) \colon \omega \mapsto \int_e^x \omega. \]

This integral factors through the category of $1$-motives \cite[\S10]{deligne:hodge-III} --- that is, to the point $x$ we can associate the $1$-motive $G_x\colon \mbb{Z}\rightarrow G$ where the map sends $1$ to $x$. Taking homology gives an extension of mixed Hodge structures
\[ 0 \rightarrow H_1(G, \mbb{Z}) \rightarrow H_1(G_x, \mbb{Z}) \rightarrow \mbb{Z} \rightarrow 0.\]
Concretely, $H_1(G_x, \mbb{Z})$ is the homology of $G$ relative to $\{e, x\}$, so that the class $1$ in the quotient trivial mixed Hodge structure $\mbb{Z}$ corresponds to any path from $e$ to $x$. The integration map can then be obtained by quotienting by the Hodge filtration $\mr{Fil}^{-1}$ in the first two terms to obtain 
\[\begin{tikzcd}
	0 & {H_1(G, \mbb{Z})} & {H_1(G_x, \mbb{Z})} & {\mbb{Z}} & 0 \\
	0 & {\Lie G} & {\Lie G_x} & {\Lie G / H_1(G,\mbb{Z})} 
	\arrow[from=1-4, to=2-4]
	\arrow[from=1-2, to=2-2]
	\arrow[from=1-3, to=2-3]
	\arrow[from=2-1, to=2-2]
	\arrow[from=1-1, to=1-2]
	\arrow[from=1-2, to=1-3]
	\arrow[from=2-2, to=2-3, equal]
	\arrow[from=1-3, to=1-4]
	\arrow[from=2-3, to=2-4]
	\arrow[from=1-4, to=1-5]
\end{tikzcd}\]
Tracing through the construction, we find $\overline{I}_{\mbb{C}}(x)$ is the image of $1 \in \mbb{Z}$.

\begin{example}\label{example.log-gm-complex} When $G=\mbb{G}_m$, if we trivialize $\Lie G$ via the vector field $x \partial_x$, then the uniformization can be identified with the complex exponential
\[ \exp\colon \mbb{C}/\mbb{Z}(1) \rightarrow \mbb{C}^\times \]
whose inverse is the logarithm map obtained by integrating the dual basis $\frac{dx}{x}$ for $\omega_{\mbb{C}^\times}$ from $1$ to $x$. It is a linear algebra exercise to see that there is a natural identification $\mr{Ext}_{\mbb{Z}-\mr{MHS}}(\mbb{Z}, \mbb{Z}(1)) = \mbb{C}/\mbb{Z}(1)$, and to verify that the mixed Hodge structure on the first homology of the (twice punctured) nodal cubic obtained from $\mbb{C}^\times$ by glueing $1$ and $x$, which is canonically an extension of $\mbb{Z}$ by $\mbb{Z}(1)$, is matched in this identification with $\log (x)$. 
\end{example}
  
\subsection{Construction of the map $\overline{I}$ and outline of proof}

Returning to setup at the start of the introduction, we now construct a homomorphism 
\[ \overline{I}\colon G(\mc{O}_{K}) \rightarrow (\Lie G \otimes_{\mathcal{O}_K} C(1))/T_p G. \]
The construction is functorial and compatible with extension $K \subset K' \subset C$. To construct it, to any point $x \in G(\mc{O}_K)$ we attach the Kummer extension 
\[ \mc{E}_x\colon 0 \rightarrow G[p^\infty] \rightarrow G_x[p^\infty] \rightarrow \mbb{Q}_p/\mbb{Z}_p \rightarrow 0\]
of $p$-divisible groups over $\mc{O}_K$ given by formally adjoining $p$-power roots of $-x$ to $G$. When $G$ is a $p$-adic formal semi-abelian scheme, this can be identified with the $p$-divisible group of the $p$-adic formal $1$-motive $\mbb{Z} \xrightarrow{1\mapsto x} G$ with the extension structure coming from the weight filtration.

We then take Tate modules and apply \cref{eq.tate-splitting} to obtain
\[\begin{tikzcd}[column sep = 1em]
	0 & {T_p G} & {T_p G_x} & {\mbb{Z}_p} & 0 \\
	0 & {\Lie G \otimes_{\mc{O}_K} C(1)} & {\Lie G_x \otimes_{\mc{O}_K}C(1)} & {(\Lie G \otimes_{\mc{O}_K} C(1)) / T_p G} 
	\arrow[from=1-4, to=2-4]
	\arrow[from=1-2, to=2-2]
	\arrow[from=1-3, to=2-3]
	\arrow[from=2-1, to=2-2]
	\arrow[from=1-1, to=1-2]
	\arrow[from=1-2, to=1-3]
	\arrow[from=2-2, to=2-3, equal]
	\arrow[from=1-3, to=1-4]
	\arrow[from=2-3, to=2-4]
	\arrow[from=1-4, to=1-5]
\end{tikzcd}\]
We define $\overline{I}(x)$ to be the image of $1 \in \mbb{Z}_p$ in ${(\Lie G \otimes_{\mc{O}_K} C(1)) / T_p G} $. 

To prove \cref{theorem.main}, we first establish that all \emph{rigidified} extensions (\cref{defn:rigidified}) of $\mbb{Q}_p/\mbb{Z}_p$ by $G[p^\infty]$ are of the form $\mc{E}_x$ for $x \in G[p^\infty](\mc{O}_K)$ (cf.~\cref{theorem.rig-ext-iso}). Using that $\mc{E}_x$ is split if and only if $x$ is $p$-divisible and a simple argument again using that crystalline representations of Hodge--Tate weight zero are unramified, we can characterize the kernel on all of $G(\mc{O}_K)$ as the $p$-divisible elements. The crystalline characterization of the image is then immediate from the equivalence between lattices in crystalline representations with Hodge--Tate weights $\{0,1\}$ and $p$-divisible groups (see e.g., \cite{kisin:cyrstallinereps} or \cite[Corollary 6.2.3]{scholze-weinstein:p-div}) --- indeed, this gives an extension of $\mbb{Q}_p/\mbb{Z}_p$ by $G[p^\infty]$ attached to any $y$ satisfying (a) and (b), and the analysis of extensions indicated already shows it comes from an $x\in G[p^\infty](\mc{O}_K)$ if and only if it also satisfies (c).

\subsection{Outline} In \cref{s.preliminaries}, we recall some constructions regarding $p$-divisible groups, $p$-adic formal semi-abelian schemes, universal covers, and $p$-divisible elements. In \cref{s.p-div-1-mot}, we give a construction of the $p$-divisible group attached to a $1$-motive, and in \cref{s.rig-ext}, we discuss rigidified extensions of $p$-divisible groups. Because it is of independent interest and the proofs are not any more difficult, in \cref{s.p-div-1-mot} and \cref{s.rig-ext}, we work in more generality than will be needed for the application to \cref{theorem.main} and also discuss some complements (e.g., the $p$-divisible group of a Raynaud-uniformized abeloid variety) --- for the proof of \cref{theorem.main}, the key result is the identification of a $p$-divisible group $G$ with the moduli of rigidified extensions of $\mbb{Q}_p/\mbb{Z}_p$ by $G$. In \cref{s.proof}, we prove \cref{theorem.main}, and in \cref{s.integration-universal-cover-other-constructions}, we show the equivalence with the definition of $\overline{I}$ via the Fontaine integral and give another construction using the crystalline incarnation of the universal cover. In \cref{s.geometry}, we discuss geometric aspects of the conjugate uniformization.

 \section{Preliminaries}\label{s.preliminaries}
 In this section, we recall the construction of the universal cover of a $p$-divisible group or $p$-adic formal semi-abelian  scheme and its basic properties.
 
 Let $R$ be a $p$-adically complete ring, and let $G$ be a $p$-divisible group or a $p$-adic formal semi-abelian scheme over $R$. The \textsf{universal cover of $G$} is the functor on $p$-adically complete $R$-algebras
 \[ \tilde{G}=\Hom(\mbb{Z}[1/p], G) = \lim (G \xleftarrow{p} G \xleftarrow{p} \cdots ), \; S \mapsto \Hom_{\mbb{Z}}(\mbb{Z}[1/p], G(S)) \]
 When $G$ is a $p$-divisible group, we can replace $\Hom_{\mbb{Z}}(\mbb{Z}[1/p], \cdot)$ with $\Hom_{\mbb{Z}_p}(\mbb{Q}_p, \cdot)$. 
 
 Inside of $\tilde{G}$, we have the Tate module
 \[ T_p G = \Hom( \mbb{Z}[1/p]/\mbb{Z}, G)=\Hom( \mbb{Q}_p/\mbb{Z}_p, G) = \lim(1\xleftarrow{p} G[p]\xleftarrow{p} G[p^2]\xleftarrow{p} \cdots) \]
 Of course, $T_p G$ only depends on $G[p^\infty]$, and it is the kernel of the projection $\tilde{G} \rightarrow G$ given by evaluation at $1$ (an fpqc surjection).  
 
 The key property of $\tilde{G}$ is that, by a construction due to Fontaine, it is invariant under topologically nilpotent thickenings:
 \begin{proposition}\label{prop.fontaine-construction}
 Let $S$ be a $p$-adically complete $R$-algebra and let $I$ be a topologically nilpotent ideal in $S$. Then reduction induces an isomorphism
 \[ \tilde{G}(S)=\tilde{G}(S/I) \]	
 \end{proposition}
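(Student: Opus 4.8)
The plan is to reduce immediately to the case where $G$ is a $p$-divisible group. Indeed, for a $p$-adic formal semi-abelian scheme $G$, the Tate module $T_pG$ and the universal cover $\widetilde G$ only see the $p$-divisible group $G[p^\infty]$: one has $T_p G = T_p(G[p^\infty])$ tautologically, and the transition maps $G \xleftarrow{p} G \xleftarrow{p} \cdots$ defining $\widetilde G$ factor through $G[p^\infty]$ in the limit (any compatible system of $p$-power-divisible points is a system of torsion points). So $\widetilde G = \widetilde{G[p^\infty]}$ as functors on $p$-adically complete $R$-algebras, and it suffices to treat $G$ a $p$-divisible group.

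For a $p$-divisible group $G$, I would first reduce to the case $I$ nilpotent. Write $S = \lim_k S/p^k$ and likewise $S/I = \lim_k (S/I)/p^k(S/I)$; since $\widetilde G(S) = \lim_k \widetilde G(S/p^k)$ (as $\widetilde G$ is a functor on $p$-adically complete rings, evaluated via its finite levels $G[p^n]$ which are schemes, hence commute with the relevant limits — or directly from $\widetilde G = \lim_n G$ along multiplication by $p$, using that each $G(S) = \lim_k G(S/p^k)$), and the same for $S/I$, it is enough to prove $\widetilde G(S/p^k) = \widetilde G((S/I)/p^k)$ for each $k$. Here $I$ maps to a nilpotent ideal $\bar I \subset S/p^k$ (topologically nilpotent modulo a power of $p$ is nilpotent), so we are reduced to: $I \subset S$ nilpotent, $I^N = 0$, show reduction $\widetilde G(S) \to \widetilde G(S/I)$ is an isomorphism. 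By dévissage along the filtration $I \supset I^2 \supset \cdots$ we may even assume $I^2 = 0$.

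Now the core argument. The key point is that the $\widetilde G$-valued points are systems $(x_n)$ with $px_{n+1} = x_n$, $x_n \in G(S)$, and $G[p^\infty]$ is formally smooth (a $p$-divisible group is formally smooth over its base): so $G(S) \to G(S/I)$ is surjective with "kernel" governed by $\Lie G \otimes_S I$, on which multiplication by $p$ acts nilpotently since $I^2 = 0$ makes $\Lie G \otimes I$ a $p$-nilpotent... wait — more precisely, for $I^2=0$ we have an exact sequence of groups $0 \to \omega_{G}^\vee \otimes_S I \to G(S) \to G(S/I)$ functorially (deformation theory of the smooth formal group $\widehat G$; the étale part contributes nothing to the kernel), where $p$ acts on $\omega_G^\vee \otimes I$ through its action on $\Lie \widehat G$, which is topologically nilpotent. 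Taking the inverse limit along $p$: surjectivity of $\widetilde G(S) \to \widetilde G(S/I)$ follows because at each stage one can lift and, since $p$ is "pro-nilpotent" on the kernel module, the obstruction to assembling a compatible system vanishes ($\lim^1$ along $p$ of a module with pro-nilpotent $p$-action is zero); injectivity follows because an element of the kernel of $\widetilde G(S) \to \widetilde G(S/I)$ is a compatible system in $\lim(\omega_G^\vee \otimes I \xleftarrow{p} \omega_G^\vee \otimes I \xleftarrow{p} \cdots)$, which is $0$ as $p$ is nilpotent there (any element is $p^m$-divisible for all $m$, hence $0$). I would phrase this cleanly as: $\ker(\widetilde G(S)\to\widetilde G(S/I)) = \Hom(\mathbb{Z}[1/p], M)$ where $M = \omega_G^\vee\otimes_S I$ is a $\mathbb{Z}_p$-module on which $p$ acts topologically nilpotently, and such a $\Hom$ vanishes; and the analogous $\lim^1$-type statement gives surjectivity.

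I expect the main obstacle to be the bookkeeping around surjectivity: carefully justifying that a point of $\widetilde G(S/I)$ lifts to a compatible system in $\widetilde G(S)$, rather than just lifting each $x_n$ individually. The clean way is to observe that reduction fits into a map of inverse systems $(G(S), p) \to (G(S/I), p)$ whose termwise kernels and cokernels are controlled, and invoke that the transition map $p$ on the kernel/cokernel modules is topologically nilpotent so that both $\lim$ and $\lim^1$ of the "error" vanish; alternatively, one can cite that $\widehat G$ over a $p$-nilpotent ring has $\widetilde{\widehat G}(S) = \widetilde{\widehat G}(S/I)$ by an explicit power-series/Witt-vector computation as in Fontaine, or in Scholze–Weinstein's treatment of the universal cover, and note the étale part is unchanged by topological invariance of the étale site. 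I would likely just cite the relevant statement in \cite{scholze-weinstein:p-div} for the $p$-divisible formal group case and supply the reduction steps above.
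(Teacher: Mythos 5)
Your first reduction---the claim that $\widetilde{G} = \widetilde{G[p^\infty]}$ as functors on $p$-adically complete $R$-algebras when $G$ is a $p$-adic formal semi-abelian scheme---is false, and this is a genuine gap. The parenthetical justification (``any compatible system of $p$-power-divisible points is a system of torsion points'') is incorrect. For $G=\widehat{\mbb{G}}_m$ over $\mc{O}_K$, a compatible system $(2, 2^{1/p}, 2^{1/p^2}, \dots)$ of $p$-power roots of $2$ lies in $\widetilde{G}(\mc{O}_C)$, yet none of its entries is torsion, and since $2\notin 1+\mf{m}_C = \mu_{p^\infty}(\mc{O}_C)$ this system is not a point of $\widetilde{\mu_{p^\infty}}(\mc{O}_C)$. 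The gap is visible already over the residue field: $\widetilde{G}(\overline{\mbb{F}}_p)=\overline{\mbb{F}}_p^\times$ (every unit is uniquely $p$-divisible via Frobenius), whereas $\widetilde{\mu_{p^\infty}}(\overline{\mbb{F}}_p)$ is trivial since $\mu_{p^n}$ over $\overline{\mbb{F}}_p$ is infinitesimal. This distinction between $\widetilde{G}$ and $\widetilde{G[p^\infty]}$ is not a technicality in this paper; it is exactly what is exploited in \cref{example.fontaine-p-adic} (a $p$-divisible element of $G(\mc{O}_K)$ lies in $G[p^\infty]$ iff it corresponds to $V_pG(\kappa)$, but $\widetilde{G}(\kappa)$ is typically strictly larger) and it drives the difference between $G(\mc{O}_K)^{\pdiv}$ and $G[p^\infty](\mc{O}_K)$ in \cref{theorem.main}.

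Fortunately, the reduction was also unnecessary: the rest of your argument works verbatim when $G$ is a $p$-adic formal semi-abelian scheme. After reducing to $p^k=0$ on $S$ and $I$ nilpotent, and by d\'evissage to $I^2=0$, formal smoothness of $G$ gives a functorial exact sequence $0\to \omega_G^\vee\otimes_S I\to G(S)\to G(S/I)\to 0$ on which the group law is linear, so $[p]$ acts on the kernel as literal multiplication by $p$ and is therefore nilpotent; the $\lim$ and $\lim^1$ along $p$ of this pro-zero system then vanish, yielding the isomorphism on universal covers. This is a spelled-out form of the Drinfeld trick. The paper packages the same idea differently: it cites \cite[Lemma 1.1.2]{katz:serre-tate} for the fact that some $p^N$ annihilates $\ker(G(S)\to G(S/I))$, and then, given $(g_1,g_2,\dots)\in\widetilde{G}(S/I)$, it picks arbitrary element-wise lifts $\tilde g_i\in G(S)$ and checks directly that $(\tilde g_N^{p^N},\tilde g_{N+1}^{p^N},\dots)$ (up to a shift of indexing) is a compatible lift independent of choices, with no $\lim^1$ bookkeeping. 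The content is the same; only the phrasing of the surjectivity argument differs.
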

 \begin{proof}
By considering liftings from $S/(I, p^k)$ to $S/I$ and passing to the limit, it suffices to suppose $p^k=0$ on $S$ and that $I$ is nilpotent. In this case, let $(g_1, g_2, \ldots) \in \tilde{G}(S/I)$, and choose arbitrary element-wise lifts $\tilde{g}_1, \tilde{g}_2, \ldots $ to $G(S)$. Then, we claim that for $N$ sufficiently large, 
 \[ (\tilde{g}_{N}^{p^N}, \tilde{g}_{N+1}^{p^N}, \ldots) \]
  is independent of choices and furnishes a lift to $\tilde{G}(S)$ that is independent of the choices thus unique. Indeed, two different lifts of $g_i$ differ by an element of $\ker (G(S) \rightarrow G(S/I))$, and the Drinfeld construction \cite[Lemma 1.1.2]{katz:serre-tate} shows a large power of $p$ annihilates this subgroup. \end{proof}
  
  \begin{remark}
  We note that Proposition \ref{prop.fontaine-construction} for $G$ a $p$-divisible group appears as \cite[Proposition 3.1.3(ii)]{scholze-weinstein:p-div}, and the proofs are similar. 
  \end{remark}

\begin{definition}\label{defn:pdiv}
We say an element of $G(S)$ is (uniquely) \textsf{$p$-divisible} if it admits a (unique) compatible system of $p$-power roots of unity in $G(S)$.
\end{definition}

Note that an element of $G(S)$ is $p$-divisible if and only if it is in the image of $\tilde{G}(S) \rightarrow G(S)$, and that the $p$-divisible elements are uniquely $p$-divisible if and only if $T_pG(S)=0$.

\begin{example}\label{example.fontaine-p-adic} If $K$ is a $p$-adic field with residue field $\kappa$ and $G/\mc{O}_K$, then \cref{prop.fontaine-construction} gives
 \[ \tilde{G}(\mc{O}_K)= \tilde{G}(\kappa). \]
In particular, $\tilde{G}(\mc{O}_K)$ and the $p$-divisible elements of $G(\mc{O}_K)$ are invariant under finite totally ramified extensions. Note that a $p$-divisible element lies in the formal neighborhood of the identity if and only if it corresponds to an element of $T_p G(\kappa)$, while it lies in the $p$-divisible group if and only if it corresponds to an element of $V_p G(\kappa)=T_p G(\kappa) \otimes \mbb{Q}_p$.

Note that $T_p G(\kappa)$ is often much larger than $T_p G(\mc{O}_{K})$ --- for example, if $\kappa$ is algebraically closed, then $G_\kappa[p^\infty] \cong G^\wedge_\kappa \times (\mbb{Q}_p/\mbb{Z}_p)^r$ where $G^\wedge_\kappa$ is the formal group/connected component of the identity and $r$ is the rank of $T_p G(\kappa)$, but for a generic lift of $G_{\kappa}$ to $\mc{O}_K$ no copy of $\mbb{Q}_p/\mbb{Z}_p$ will lift to a subgroup so we will have $T_p G(\mc{O}_K)=0$. In this case, $T_pG(\kappa)$ is identified with the subgroup of $G^\wedge(\mc{O}_K)$ consisting of points that are (uniquely) $p$-divisible in $G(\mc{O}_K)$, and in fact this subgroup determines the extension structure for the connected-\'{e}tale sequence (see \cref{example.connected-etale}). 
\end{example}

\begin{remark}\label{remark.isogeny-universal-cover} It also follows from \cref{prop.fontaine-construction} that, for $H_1$ and $H_2$ $p$-divisible groups over $R$ and $I$ an open topologically nilpotent ideal of $R$, 
\[ \Hom(\widetilde{H}_1, \widetilde{H}_2)=\Hom(H_{1,R/I},H_{2,R/I}) \otimes \mbb{Q}_p. \] 
In other words, to give a map of universal covers is the same as to give a map in the isogeny-category of $p$-divisible groups over $R/I$. This is a helpful way to encode the independence of the latter on the choice of $I$ while also bringing to the forefront the Fontaine lifts as in the proof of \cref{prop.fontaine-construction}. We use this only in the case when $H_1$ is \'{e}tale, which immediately reduces to the case that $H_1=\mbb{Q}_p/\mbb{Z}_p$ where it is an immediate consequence of \cref{prop.fontaine-construction} and the identity $T_p H_2(A)\otimes \mbb{Q}_p = \tilde{H}_2(A)$ for any $R$-algebra $A$ such that $p$ is nilpotent in $A$. 
\end{remark}

\section{The $p$-divisible group of a 1-motive}\label{s.p-div-1-mot}
In this section, we construct the $p$-divisible group of a 1-motive. 
First, we recall the notion of a 1-motive (see \cite[\S10]{deligne:hodge-III}, \cite{barbieri-viale:1-motives}). Let $R$ be a ring. 

\begin{definition}
A \textsf{1-motive over $\Spec R$} is a map $\varphi\colon M \rightarrow G$ where $G$ is a semi-abelian scheme over $R$ and $M$ is an \'{e}tale $\mbb{Z}$-local system on $\Spec R$. 
\end{definition}

Given a $1$-motive $\varphi$ and a prime $p$, we can construct an extension of group schemes over $R$
\[ \mc{E}_\varphi\colon 0 \rightarrow G \rightarrow G_{\varphi} \rightarrow M \otimes (\mbb{Z}[1/p]/ \mbb{Z}) \rightarrow 0 \]
by formally adjoining $p$-power roots along $\varphi$. Precisely,  $G_\varphi$ is the push-out 
\begin{equation}\label{eq.push-out-diagram}\begin{tikzcd}
	{M} & G \\
	{M \otimes \mbb{Z}[1/p]} & {G_\varphi}
	\arrow[from=1-1, to=2-1]
	\arrow["-\varphi"', from=1-1, to=1-2]
	\arrow[from=2-1, to=2-2]
	\arrow[from=1-2, to=2-2]
\end{tikzcd}\end{equation}
which we can realize concretely as a disjoint union equipped with an addition law defined by carrying:~if we pass to a finite \'{e}tale cover and fix a trivialization $M\cong \mbb{Z}^m$, then, writing  $I_p=\mbb{Z}[1/p] \cap [0,1)  \subset \mbb{Q}$, 
\begin{equation}\label{eq.carrying-law} G_\varphi \cong \bigsqcup_{t\in I_p^m} G, \; g_{s} + h_t=(g+h - \varphi(\lfloor s+t \rfloor))_{\{s+t\}} \end{equation}
where here $\lfloor \cdot \rfloor$ denotes floor and $\{ \cdot \}$ denotes the fractional part. 

Note in particular that
\begin{equation}\label{eq.pntorsion} G_{\varphi}[p^n] = \bigsqcup_{t \in \left(1/p^n\mbb{Z} \cap [0,1)\right)^m } G_{\varphi}[p^n]_{t} \end{equation}
where $G_{\varphi}[p^n]_t$ is the fiber of multiplication by $p^n$ on $G$ above $\varphi(p^n t)$. 

In particular, $G_\varphi[p^\infty]$ is a $p$-divisible group, and  $\mc{E}_\varphi[p^\infty]$ is an extension of $p$-divisible groups
\[ \mathcal{E}_{\varphi}[p^{\infty}]\colon 0 \rightarrow G[p^\infty] \rightarrow G_{\varphi}[p^\infty] \rightarrow M \otimes (\mbb{Q}_p/\mbb{Z}_p) \rightarrow 0. \]

\begin{definition}
    The \textsf{$p$-divisible group attached to the $1$-motive $\varphi\colon M \rightarrow G$} is $G_{\varphi}[p^\infty]$, and the extension structure $\mc{E}_\varphi[p^\infty]$ is the weight filtration.
\end{definition}

\begin{remark}
Over $\mbb{C}$, \cref{eq.pntorsion} agrees with the $p^n$ torsion of a 1-motive as constructed in \cite[(10.1.5)]{deligne:hodge-III} (in particular, the minus sign in the push-out diagram \cref{eq.push-out-diagram} arises naturally from the Koszul sign rule). More generally, this construction agrees with the construction of the $p$-divisible group attached to a $1$-motive in \cite[\S2.4]{barbieri-viale:1-motives}.
\end{remark}

Next, we define a $p$-adic formal 1-motive.

\begin{definition}
If $R$ is $p$-adically complete, a \textsf{$p$-adic formal $1$-motive} is a map $M \rightarrow G$ where $M$ is an \'{e}tale $\mbb{Z}$-local system on $\Spf R$ (or just $\Spec R/p$) and $G$ is a $p$-adic formal semi-abelian scheme over $\Spf R$. 
\end{definition}

If $G$ is a $p$-adic formal semi-abelian scheme over $R$, then applying the previous construction over $R/p^n$ for all $n$ yields an extension of $p$-divisible groups $\mc{E}_{\varphi}[p^\infty]$ over $R$. Note that if $\varphi$ factors through $G[p^\infty]$, then in the construction of $G_\varphi[p^\infty]$ we may dispense with $G$ altogether and work from the beginning with $G[p^\infty]$ in its place. Note that the maps $\varphi\colon{\mbb{Z}} \rightarrow G[p^\infty]$ correspond exactly to the points \`{a} la Tate,
\[  G[p^\infty](R) := \lim_{k}\colim_n G[p^n](R/p^k), \]
which typically is much larger than $G(R)[p^\infty]$.

On the other hand, there are sometimes very interesting extensions of $G[p^\infty]$ that can only be seen by considering points in $G$, as the following example illustrates.
\begin{example}\label{example.tate-curve}
Let $E/\mbb{Z}(\!(q)\!)$ be the Tate elliptic curve. Then there is a canonical isomorphism $E[p^\infty] \cong (\mbb{G}_m)_{1\mapsto q}[p^\infty]$ where $1 \mapsto q$ denotes the map $\varphi\colon \mbb{Z}\rightarrow \mbb{G}_m(\mbb{Z}(\!(q)\!))$ sending $1$ to $q$. Even if we $p$-adically complete, this extension of $\mbb{Q}_p/\mbb{Z}_p$ by $\mu_{p^\infty}$ still does not arise from a $1$-motive factoring through $\mu_{p^\infty}$, which see only Serre--Tate extensions; this version was treated in detail in \cite{howe:circle-action}, where the construction of $\mc{E}_\varphi$ for $G=\mbb{G}_m$ and $M=\mbb{Z}$ was referred to as the theory of Kummer extensions and was used to unify computations for $p$-adic modular forms in Serre--Tate and cuspidal coordinates. 
\end{example}

\begin{example}
There is a completely analogous construction for rigid analytic 1-motives, and using this one can construct the $p$-divisible group of a Raynaud uniformized abeloid variety in the same way using a rigid 1-motive. In this case one obtains a $p$-divisible group over $K$ that does not extend to $\mc{O}_K$ (but can sometimes still be made sense of algebraically over a complete $\mc{O}_K$-algebra in the limit by adding a formal variable as in  \cref{example.tate-curve}). 
\end{example}

These kinds of examples will not play a serious role in the remainder of this work because of \cref{theorem.canonical-rigidifications} below, which implies that for $H$ a $p$-divisible group over the ring of integers in a $p$-adic field, up to a minor discrepancy all extensions of $\mbb{Q}_p/\mbb{Z}_p$ by $H$ can be obtained already from points of $H$. 

\section{Rigidifed Extensions}\label{s.rig-ext}
In this section, we define rigidifed extensions and prove \cref{theorem.canonical-rigidifications}. 
Let $R$ be a $p$-adically complete ring, and let $H/R$ be a $p$-divisible group. Suppose $M$ is a $\mbb{Z}_p$-local system on $\Spf R$ (equivalently $\Spec R/p$) and $\varphi\colon M \rightarrow H$ is a map. 
We note also that $M$ is equivalent to the \'{e}tale $p$-divisible group $M \otimes \mbb{Q}_p/\mbb{Z}_p$ (from which $M$ is recovered as the Tate module). 

\begin{remark}
It is tempting to call $\varphi$ a $p$-divisible $1$-motive, but this would be a mistake (see \cref{remark.rig-BKF}). 
\end{remark}

\begin{example} 
Suppose $\varphi\colon  M \rightarrow G$ is a $p$-adic formal 1-motive over $\Spf R$ such that $\varphi$ factors through $G[p^\infty]$. Then $\varphi$ extends uniquely to 
\[ \varphi \otimes \mbb{Z}_p \colon M \otimes \mbb{Z}_p \rightarrow G[p^\infty]. \]	
\end{example}

Given $\varphi\colon M \rightarrow H$, we form the pushout $H_\varphi$ analogous to  the earlier construction \cref{eq.push-out-diagram} with $1$-motives but replacing $\mbb{Z}[1/p]$ with $\mbb{Q}_p$ and $\mbb{Z}$ with $\mbb{Z}_p$:
\[
\begin{tikzcd}
M\arrow{r}{-\varphi} \arrow{d} & H \arrow{d}\\
M\otimes \mathbb{Q}_p \arrow{r} & H_{\varphi}.
\end{tikzcd}
\]
Note that $\mbb{Q}_p$ and $\mbb{Z}_p$ are equipped with their natural topologies and should be interpreted here as topological constant sheaves.

This admits an identical explicit description via a carrying law after pro-finite \'{e}tale cover to trivialize $M$. In particular, we obtain a short exact sequence
\[ \mc{E}_\varphi\colon 0 \rightarrow H \rightarrow H_\varphi \rightarrow M \otimes (\mbb{Q}_p/\mbb{Z}_p) \rightarrow 0 \]
and at the level of universal covers we have 
\[ \tilde{\mc{E}}_\varphi\colon 0 \rightarrow \tilde{H} \rightarrow \tilde{H}_\varphi \rightarrow M \otimes \mbb{Q}_p \rightarrow 0. \]
In fact, there is another important piece of data in the mix:~there is a canonical section $s_\varphi\colon M \otimes \mbb{Q}_p\rightarrow \tilde{H}_\varphi$ of the induced extension of universal covers $\tilde{\mc{E}}_\varphi$ coming from the canonical map $M \otimes \mbb{Q}_p \rightarrow H_\varphi$ extending $-\varphi$. 

\begin{definition}\label{defn:rigidified}
With the notation as above, we refer to an extension $\mc{E}_{\varphi}$ equipped with a section $s$ of $\tilde{\mc{E}}$ as \textsf{rigidified}, and let $\RigExt(M \otimes (\mbb{Q}_p/\mbb{Z}_p), H)$ refer to the functor on $p$-adically complete $R$-algebras sending $S$ to the set of isomorphism classes of rigidified extensions of $H_S$ by $M_S \otimes (\mbb{Q}_p/\mbb{Z}_p)$. 
\end{definition}

We now describe how one can interpret the functor $\RigExt(M \otimes (\mbb{Q}_p/\mbb{Z}_p), H)$ in terms of $p$-adic 1-motives. 
\begin{theorem}\label{theorem.rig-ext-iso}
The assignment $\varphi \mapsto (\mc{E}_\varphi, s_\varphi)$ is an isomorphism of functors from $p$-adically complete $R$-algebras to abelian groups 
\[ \Hom(M, H) \rightarrow \RigExt(M \otimes (\mbb{Q}_p/\mbb{Z}_p), H) \]
where the right-hand side is equipped with the Baer sum.\end{theorem}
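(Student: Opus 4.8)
The plan is to construct an inverse to $\varphi \mapsto (\mc{E}_\varphi, s_\varphi)$ and check both composites are the identity, working functorially so that it suffices to treat a fixed $p$-adically complete $R$-algebra $S$ and, by descent, to trivialize $M$ along a (pro-)finite \'etale cover so we may assume $M = \mbb{Z}_p^m$, i.e.\ $M \otimes (\mbb{Q}_p/\mbb{Z}_p) = (\mbb{Q}_p/\mbb{Z}_p)^m$. By additivity in $M$ we reduce further to $M = \mbb{Z}_p$.

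First I would recall the data: a rigidified extension is a short exact sequence $\mc{E}\colon 0 \to H \to E \to \mbb{Q}_p/\mbb{Z}_p \to 0$ of $p$-divisible groups over $S$ together with a splitting $s\colon \mbb{Q}_p \to \tilde E$ of $\tilde{\mc{E}}\colon 0 \to \tilde H \to \tilde E \to \mbb{Q}_p \to 0$. Given such data, I would produce $\varphi \in \Hom(\mbb{Z}_p, H) = H[p^\infty](S)$ as follows: the section $s$ gives an element $s(1) \in \tilde E(S)$ lifting $1 \in \mbb{Q}_p$; its image in $E(S)$ lies over $1 \bmod \mbb{Z}_p$, i.e.\ in $E[p^\infty](S)$, but using instead that $\tilde{\mc{E}}$ also receives the canonical lift, the difference between $s(1)$ and the image of any fixed element of $\tilde H$ is what records $\varphi$. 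Concretely, since $\tilde{\mc{E}}$ is the pushforward of $\tilde H \hookleftarrow T_pH$ along... — cleaner: the obstruction to extending $s$ to a map of extensions over $\mbb{Z}_p \subset \mbb{Q}_p$ is exactly an element of $\Hom(\mbb{Z}_p, H)$, since $s$ restricted to $\mbb{Z}_p$ lands in $\ker(\tilde E \to \mbb{Q}_p/\mbb{Z}_p) $ modulo $\tilde H$... I would set this up precisely as: $\varphi := (p \cdot s(1/p) \text{ in } E)$-type formula, or most robustly, identify $\RigExt$ directly with $\mr{Ext}^1$ computations. Namely, a rigidification trivializes the class of $\tilde{\mc{E}}$ in $\mr{Ext}^1(\mbb{Q}_p, \tilde H)$, and the standard long exact sequence associated to $0 \to T_pH \to \tilde H \to H \to 0$ and $0 \to \mbb{Z}_p \to \mbb{Q}_p \to \mbb{Q}_p/\mbb{Z}_p \to 0$ then produces from $\mc{E}$ plus its trivialized universal-cover class a well-defined element of $\Hom(\mbb{Z}_p, H)$. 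This is the inverse map.

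The two verifications then go as follows. Starting from $\varphi$: the construction of $H_\varphi$ as a pushout makes $\tilde H_\varphi$ the pushout of $\tilde H \hookleftarrow T_pH \to$ (nothing), so $\tilde{\mc{E}}_\varphi$ is visibly split by $s_\varphi$ coming from $M \otimes \mbb{Q}_p \to H_\varphi$, and feeding $(\mc{E}_\varphi, s_\varphi)$ into the inverse recovers $\varphi$ because the pushout square exhibits precisely $-\varphi$ (hence $\varphi$, after the sign in the obstruction) as the discrepancy. Conversely, starting from an arbitrary rigidified $(\mc{E}, s)$ with associated $\varphi$, I would produce an isomorphism $H_\varphi \cong E$ respecting the extension structure and the sections: the section $s$ together with $H \hookrightarrow E$ gives, via the carrying-law description \eqref{eq.carrying-law}, a map from the explicit model $\bigsqcup_{t \in I_p} H$ to $E$, and one checks it is an isomorphism of short exact sequences and matches $s_\varphi$ with $s$ by construction. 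Finally I would check additivity: that $\varphi_1 + \varphi_2 \mapsto$ Baer sum of $(\mc{E}_{\varphi_i}, s_{\varphi_i})$, which is formal from the pushout description since Baer sum is computed by pullback along the diagonal and pushout along the sum, and these operations commute with the pushout defining $H_\varphi$.

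The main obstacle is bookkeeping rather than conceptual: pinning down the sign and the precise normalization of the inverse map so that the rigidification $s$ (a splitting only after inverting $p$) gets converted into an honest element of $H[p^\infty](S)$ and not merely an element of $\tilde H(S)$ or of $E[p^\infty](S)$ — i.e.\ showing the candidate element is independent of all auxiliary choices (choice of lift of $1$, choice of trivialization of $M$) and is compatible with the descent back down from the trivializing cover. I expect this is handled cleanly by phrasing everything through the snake lemma / $\mr{Ext}$ long exact sequences attached to $0 \to T_pH \to \tilde H \to H \to 0$, where "rigidified extension of $\mbb{Q}_p/\mbb{Z}_p$ by $H$" is tautologically identified with "extension of $\mbb{Q}_p$ by $\tilde H$ equipped with a reduction mod $\mbb{Z}_p$-and-$T_pH$ structure," making the Hom-group $\Hom(\mbb{Z}_p, H)$ appear on the nose as the relevant connecting term.
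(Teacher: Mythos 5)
Your central idea matches the paper's: compose the rigidification $s\colon M\otimes\mbb{Q}_p\to\tilde{\mc{E}}$ with the first-coordinate projection $\tilde{\mc{E}}\to\mc{E}$ and restrict to $M\subset M\otimes\mbb{Q}_p$; because $s$ is a section, the result lands in $\ker\bigl(\mc{E}\to M\otimes(\mbb{Q}_p/\mbb{Z}_p)\bigr)=H$, giving the desired element of $\Hom(M,H)$; the pushout universal property then furnishes the isomorphism of rigidified extensions, and comparing pushouts with Baer sums gives additivity. The trouble is that the write-up never commits cleanly to this. You start on it (``its image in $E(S)$ lies over $1\bmod\mbb{Z}_p$'' --- but $1\bmod\mbb{Z}_p=0$, so the correct conclusion is that the image lies in $H(S)$, not merely in ``$E[p^\infty](S)$,'' which is all of $E(S)$), then veer into a spurious ``difference with a canonical lift'' (a general $\mc{E}$ carries no canonical lift), a vague obstruction reformulation, a malformed ``$p\cdot s(1/p)$-type formula,'' and finally an $\Ext$-long-exact-sequence sketch that is plausible but not executed. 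You have located the idea; you have not written a proof.

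Two further points. The reduction to $M=\mbb{Z}_p$ via a trivializing pro-finite-\'etale cover is unnecessary and introduces a descent step you only gesture at; the paper's argument works uniformly with an arbitrary $M$, since restriction to $M$ and the pushout are already functorial in $M$. And your unease about signs is well founded and should be resolved rather than deferred: the pushout defining $H_\varphi$ uses $-\varphi$, so the structure map $M\otimes\mbb{Q}_p\to H_\varphi$ that $s_\varphi$ lifts restricts to $-\varphi$ on $M$; you must check that your normalization of the inverse actually undoes $\varphi\mapsto(\mc{E}_\varphi, s_\varphi)$, sign included.
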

\begin{proof}
If $(\mc{E}, s)$ is a rigidified extension, let $s_0$ denote the composition of $s$ with projection to the first coordinate from $\tilde{\mc{E}} \to \mc{E}$:
\[\begin{tikzcd}
	0 & {\tilde{H}} & {\tilde{\mathcal{E}}} & {M\otimes \mathbb{Q}_p} & 0 \\
	0 & H & {\mathcal{E}} & {M\otimes (\mathbb{Q}_p/\mathbb{Z}_p)} & 0
	\arrow[from=1-1, to=1-2]
	\arrow[from=1-2, to=1-3]
	\arrow[shift right=1, from=1-3, to=1-4]
	\arrow["s"', shift right=1, from=1-4, to=1-3]
	\arrow["{s_0}"', from=1-4, to=2-3]
	\arrow[from=1-2, to=2-2]
	\arrow[from=1-3, to=2-3]
	\arrow[from=1-4, to=2-4]
	\arrow[from=2-1, to=2-2]
	\arrow[from=2-2, to=2-3]
	\arrow[from=2-3, to=2-4]
	\arrow[from=2-4, to=2-5]
	\arrow[from=1-4, to=1-5]
\end{tikzcd}\]
When we restrict $s_0$ to $M=M\otimes \mbb{Z}_p \hookrightarrow M\otimes \mbb{Q}_p$, this morphism factors through the kernel of the bottom right map, and so we recover a map $M\rightarrow H$. In other words, $s_0$ lies in $\Hom(M, H)$, and hence we obtain a canonical isomorphism from the push-out property
\[ \mc{E}_{s_0} \xrightarrow{\sim} \mc{E} \]
compatible with the sections. 

The assignment $(\mc{E}, s) \mapsto s_0$ is well-defined and gives an inverse to the map in the statement of the theorem. That the map is compatible with the group structures is immediate by comparing the push-outs in the definition of $\mc{E}_\varphi$ and of the Baer sum of extensions. \end{proof}

\begin{lemma}\label{lemma.rig-kernel}
The kernel of the induced map
\[ \Hom(M, H)(R) \rightarrow \Ext(M \otimes (\mbb{Q}_p/\mbb{Z}_p), H)(R)	 \]
obtained by forgetting the rigidification is the image of $\Hom(M \otimes \mbb{Q}_p, H)(R)$.
\end{lemma}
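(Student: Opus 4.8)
The map in the lemma is, via \cref{theorem.rig-ext-iso}, the homomorphism $\Hom(M,H) \to \Ext(M\otimes(\mbb{Q}_p/\mbb{Z}_p), H)$ sending $\varphi$ to the class of $\mc{E}_\varphi$, so the plan is to compute the kernel of $\varphi \mapsto [\mc{E}_\varphi]$. The key observation is that, by its very construction, $\mc{E}_\varphi$ is the pushforward along $-\varphi\colon M \to H$ of the tautological extension
\[ \tau\colon 0 \rightarrow M \rightarrow M\otimes\mbb{Q}_p \rightarrow M\otimes(\mbb{Q}_p/\mbb{Z}_p) \rightarrow 0. \]
Indeed, the defining square for $H_\varphi$ is a pushout, pushouts preserve cokernels, and the cokernel of $M \hookrightarrow M\otimes\mbb{Q}_p$ is $M\otimes(\mbb{Q}_p/\mbb{Z}_p)$. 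Consequently $[\mc{E}_\varphi] = -\varphi_*[\tau]$; since $\varphi \mapsto \varphi_*[\tau]$ is precisely the connecting homomorphism $\delta$ attached to $\tau$, and negation is a bijection of $\Ext$ fixing $0$, we get $\mr{Ker}(\varphi \mapsto [\mc{E}_\varphi]) = \mr{Ker}(\delta)$.

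Next I would invoke the long exact sequence obtained by applying $\Hom(-,H)$ and $\Ext(-,H)$ to $\tau$, whose relevant segment is
\[ \Hom(M\otimes\mbb{Q}_p, H)(R) \xrightarrow{\rho} \Hom(M,H)(R) \xrightarrow{\delta} \Ext(M\otimes(\mbb{Q}_p/\mbb{Z}_p), H)(R), \]
where $\rho$ is restriction along $M = M\otimes\mbb{Z}_p \hookrightarrow M\otimes\mbb{Q}_p$ --- the natural map implicit in the statement. Exactness gives $\mr{Ker}(\delta) = \mr{Im}(\rho)$, which is the assertion. The one small point to address here is the compatibility of the $\Ext$ in the statement with $\Ext^1$ of abelian sheaves on $p$-adically complete $R$-algebras: these agree because an extension of $p$-divisible-group sheaves has $p$-divisible middle term, so one may compute with sheaves, where the long exact sequence is automatic. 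As a consistency check, note $\Hom(M\otimes\mbb{Q}_p, H) = \Hom(M\otimes\mbb{Q}_p, \tilde{H})$ since $M\otimes\mbb{Q}_p$ is uniquely $p$-divisible, so $\mr{Im}(\rho)$ consists of maps $M \to H$ factoring through $\tilde{H}$, recovering the familiar fact that $\mc{E}_x$ splits exactly when $x$ is $p$-divisible in the case $M=\mbb{Z}_p$.

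For readers who prefer to avoid the long exact sequence, both inclusions admit a direct verification. For $\mr{Im}(\rho) \subseteq \mr{Ker}$: if $\varphi = \psi\circ\iota$ with $\psi\colon M\otimes\mbb{Q}_p \to H$ and $\iota\colon M\hookrightarrow M\otimes\mbb{Q}_p$, then pushing $\tau$ out along $-\varphi = (-\psi)\circ\iota$ is split (a one-line check that the pushout is $H \oplus (M\otimes(\mbb{Q}_p/\mbb{Z}_p))$ as an extension), so $\mc{E}_\varphi$ is split. For $\mr{Ker} \subseteq \mr{Im}(\rho)$: if $\mc{E}_\varphi$ is split, choose a splitting $\sigma\colon M\otimes(\mbb{Q}_p/\mbb{Z}_p) \to H_\varphi$; the canonical map $c\colon M\otimes\mbb{Q}_p \to H_\varphi$ from the pushout square (the one underlying $s_\varphi$) lifts the projection $M\otimes\mbb{Q}_p \to M\otimes(\mbb{Q}_p/\mbb{Z}_p)$, so $c - \sigma\circ(\textrm{proj})$ takes values in $H = \mr{Ker}(H_\varphi \to M\otimes(\mbb{Q}_p/\mbb{Z}_p))$ and defines $\psi\colon M\otimes\mbb{Q}_p \to H$ with $\psi\circ\iota = c\circ\iota = -\varphi$ (the last equality from the pushout square, since the projection kills $M$); thus $-\psi$ extends $\varphi$ and $\varphi \in \mr{Im}(\rho)$.

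I do not expect a serious obstacle: the content is entirely in identifying $\mc{E}_\varphi$ with $-\varphi_*\tau$ and in the exactness of the $\Hom/\Ext$ sequence for $\tau$. The only things requiring care are the sheaf-theoretic bookkeeping mentioned above and tracking the sign, neither of which affects the kernel.
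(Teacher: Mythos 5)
Your argument is correct. The paper's proof is the terse version of your third paragraph: it simply observes that, by the pushout property, a map $\psi\colon M\otimes\mbb{Q}_p\to H$ restricting to $\varphi$ is the same data as a trivialization of $\mc{E}_\varphi$, and leaves the two directions implicit. Your direct verification is exactly this, written out carefully with the sign ($-\varphi$) tracked, and it holds up: the factorization $\varphi=\psi\circ\iota$ makes the pushout along $-\varphi$ manifestly split, and conversely a splitting $\sigma$ combines with the canonical map $c$ from the pushout square to produce $\psi := c - \sigma\circ\mr{proj}$ landing in $H$ with $\psi\circ\iota=-\varphi$.

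What you add on top of the paper is the homological repackaging: identifying $[\mc{E}_\varphi]=(-\varphi)_*[\tau]$ and recognizing $\varphi\mapsto\varphi_*[\tau]$ as the connecting map in the $\Hom$/$\Ext$ long exact sequence associated to the tautological extension $\tau\colon 0\to M\to M\otimes\mbb{Q}_p\to M\otimes(\mbb{Q}_p/\mbb{Z}_p)\to 0$. This buys conceptual clarity (the lemma becomes exactness at $\Hom(M,H)$) and cleanly handles the sign, since negation is a bijection of $\Ext$ fixing $0$. You are right that one needs to check that the $\Ext$ of the statement (extensions in the category of $p$-divisible groups) agrees with $\Ext^1$ of fppf abelian sheaves, and your justification — that a sheaf extension of $M\otimes(\mbb{Q}_p/\mbb{Z}_p)$ by $H$ is automatically $p$-divisible — is the right point, though the paper sidesteps this entirely by never leaving the category of $p$-divisible groups. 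Net: same mathematical content, with your version more explicit and giving a second (equivalent) proof.
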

\begin{proof}
Indeed, by the push-out property any element of $\Hom(M \otimes \mbb{Q}_p, H)(R)$ restricting to a given $\varphi \in \Hom(M, H)(R)$ gives rise to an isomorphism of $\mc{E}_\varphi$ with the trivial extension, and vice versa. 
\end{proof}

In our main case of interest, we can also understand the image:
\begin{theorem}\label{theorem.canonical-rigidifications}
Suppose $R/p$ is Artinian local with residue field $\kappa$. Then a rigidification of $\mc{E} \in \Ext(M \otimes (\mbb{Q}_p/\mbb{Z}_p), H)$ is equivalent to a splitting of $\mc{E}_\kappa$ in the isogeny category of $p$-divisible groups over $\kappa$. In particular, 
\begin{enumerate}
\item If the residue field $\kappa$ is algebraically closed, then any extension of an \'{e}tale $p$-divisible group by $H$ can be rigidified.
\item If the residue field $\kappa$ is perfect, then to give a rigidification it is equivalent to give a splitting in the isogeny category of $p$-divisible groups over $\kappa$ of the induced extension of $\mbb{Q}_p/\mbb{Z}_p$ by $H_\kappa^\et$. In particular, if $H_\kappa$ is connected, then any extension of an \'{e}tale $p$-divisible group by $H$ can be uniquely rigidified. 
\end{enumerate}
\end{theorem}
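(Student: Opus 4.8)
The key input is Proposition~\ref{prop.fontaine-construction} (Fontaine's construction), which makes the universal cover $\tilde{H}$ — and hence sections of the universal-cover extension $\tilde{\mc{E}}$ — insensitive to topologically nilpotent thickenings. Since $R/p$ is Artinian local, its maximal ideal is nilpotent, so $R$ is a topologically nilpotent thickening of $\kappa$ (more precisely, $\ker(R \to \kappa)$ is topologically nilpotent: it contains $p$, which is topologically nilpotent, and modulo $p$ it is the nilpotent maximal ideal of $R/p$). The plan is to show that a rigidification of $\mc{E}$ over $R$ is the same data as a rigidification of $\mc{E}_\kappa$ over $\kappa$, and then to identify a rigidification over a field with a splitting in the isogeny category.

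First I would set up the translation to the base $\kappa$. Pulling back the extension $\mc{E}$ along $R \to \kappa$ gives $\mc{E}_\kappa$, and passing to universal covers gives $\tilde{\mc{E}}_\kappa$. By Proposition~\ref{prop.fontaine-construction} applied to each of $\tilde{H}$, $\tilde{H}_\varphi$ (using that $\mc{E}_\kappa$ is represented by some $\varphi$ via Theorem~\ref{theorem.rig-ext-iso}, or directly to whatever $p$-divisible group sits in the middle) and $M\otimes\mathbb{Q}_p$ (which is already "constant"), reduction mod the thickening ideal induces an isomorphism of the three-term sequences $\tilde{\mc{E}} \cong \tilde{\mc{E}}_\kappa$. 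Hence giving a section $s$ of $\tilde{\mc{E}}$ is the same as giving a section of $\tilde{\mc{E}}_\kappa$; that is, a rigidification of $\mc{E}$ over $R$ is canonically the same as a rigidification of $\mc{E}_\kappa$ over $\kappa$. (One should check this identification is compatible with the notion of isomorphism of rigidified extensions, but that is formal.)

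Next I would identify, over the field $\kappa$, rigidifications with splittings in the isogeny category. By Remark~\ref{remark.isogeny-universal-cover}, for $\kappa$ (where the unit ideal's only topologically nilpotent ideal considerations are trivial but the statement $\Hom(\tilde{H}_1,\tilde{H}_2) = \Hom(H_1,H_2)\otimes\mathbb{Q}_p$ still holds over a field of characteristic $p$ with $I$ the zero ideal, or more relevantly we use it for $H_1 = M\otimes\mathbb{Q}_p/\mathbb{Z}_p$ étale), a section of $\tilde{\mc{E}}_\kappa$ as a map $M\otimes\mathbb{Q}_p \to \tilde{H}_{\varphi,\kappa}$ splitting the universal-cover sequence is the same as a morphism $M\otimes\mathbb{Q}_p/\mathbb{Z}_p \to H_{\varphi,\kappa}$ in the isogeny category splitting $\mc{E}_\kappa$ — since $M$ is étale, a map out of $\tilde{H}_1 = M\otimes\mathbb{Q}_p$ is the same as an isogeny-category map out of $M\otimes\mathbb{Q}_p/\mathbb{Z}_p$. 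This gives the first sentence of the theorem.

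Finally the two itemized consequences: (1) If $\kappa$ is algebraically closed, then over $\kappa$ the connected–étale sequence of any $p$-divisible group splits, and an extension of an étale group by $H$, after pushing out along $H \to H^\et_\kappa$ and using that the connected part contributes no extensions of an étale group (being connected, $\Hom$ and $\Ext^1$ of an étale group into it vanish in the isogeny category over an algebraically closed field), reduces to an extension of étale by étale, which is visibly split in the isogeny category up to isogeny — so a rigidification exists. (2) If $\kappa$ is merely perfect, the connected–étale sequence of $H_\kappa$ still splits canonically (functorially), so $\mc{E}_\kappa$ in the isogeny category decomposes the data of a splitting into its projection onto the étale part $H^\et_\kappa$, the projection onto $H^\circ_\kappa$ being automatic since $\Hom(M\otimes\mathbb{Q}_p/\mathbb{Z}_p, H^\circ_\kappa)=0$ in the isogeny category (no maps from étale to connected over a perfect field); hence a rigidification is equivalent to a splitting in the isogeny category of the induced extension by $H^\et_\kappa$, and if $H_\kappa$ is connected this extension is of étale by zero, so splits uniquely, giving a unique rigidification. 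The main obstacle I anticipate is bookkeeping the functoriality and uniqueness claims carefully — in particular verifying that the identification of rigidifications over $R$ and over $\kappa$ respects isomorphism classes, and that "$\Hom$ from étale to connected vanishes in the isogeny category over a perfect field" is applied correctly (it is the statement that a connected $p$-divisible group over a perfect field has no étale quotient after isogeny, i.e. the connected–étale splitting). None of these is deep, but they are where errors could creep in.
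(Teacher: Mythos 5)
Your proof is correct and takes essentially the same route as the paper: reduce a rigidification over $R$ to data over the residue field $\kappa$ via Fontaine's construction (equivalently, \cref{remark.isogeny-universal-cover}, which the paper cites directly — you re-derive it in two steps, first lifting $\tilde{\mc{E}}\cong\tilde{\mc{E}}_\kappa$, then identifying sections with isogeny-category splittings), and then invoke Dieudonné-theoretic facts over $\kappa$. The only cosmetic difference is that for items (1) and (2) you argue via the connected–étale decomposition and $\Hom$/$\Ext$ vanishing between slope-$0$ and positive-slope pieces, whereas the paper phrases the same facts as semisimplicity of the isogeny category over an algebraically closed field and descent of the slope decomposition to a perfect field; these are the same underlying inputs.
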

\begin{proof}
The first part of the theorem follows from \cref{remark.isogeny-universal-cover}. 
Suppose given an extension $\mc{E}$ as in (1) or (2). Then to split $\tilde{\mc{E}}$ it is equivalent to split $\mc{E}_\kappa$ up to isogeny. In the first case, the category of $p$-divisible groups up to isogeny over $\kappa$ is semi-simple, so it is always split. In the second case, the category may not be semi-simple but the slope decomposition still descends to $\kappa$, so that a splitting occurs purely in the slope zero (\'{e}tale) part. 
\end{proof}
\begin{remark} The $p$-divisible group of the Tate curve over $\mbb{F}_p(\!(q)\!)$, an extension of $\mbb{Q}_p/\mbb{Z}_p$ by $\mu_{p^\infty}$ (see \cref{example.tate-curve}), shows that the assumption that the residue field is perfect in (2) cannot be removed. 	
\end{remark}

\begin{example}
Consider the projection map
\[ \tau\colon\Gal(\overline{\mbb{Q}}_p/\mbb{Q}_p)\rightarrow \Gal(\overline{\mbb{F}}_p/\mbb{F}_p) = \widehat{\mbb{Z}}\rightarrow \mbb{Z}_p.\] 
The Galois representation $\begin{bmatrix} 1 & \tau \\ 0 & 1 \end{bmatrix}$ is the Tate module of a non-trivial extension of $\mbb{Q}_p/\mbb{Z}_p$ by itself over $\mbb{Z}_p$ that cannot be rigidified even if we allow passage to arbitrary finite extensions. 
\end{example}

\begin{example}\label{example.connected-etale}
Suppose (for simplicity) that $\kappa$ is algebraically closed. If $T_p G(\mc{O}_K)=0$, then $T_p G(\kappa) \subset \tilde{G}(\kappa)=\tilde{G}(\mc{O}_K)$ is identified via projection to the first coordinate with the $\mbb{Z}_p$-module $M$ of elements in $G^\wedge(\mc{O}_K)$ that are $p$-divisible in $G(\mc{O}_K)$. The connected-\'{e}tale sequence
\[ 0 \rightarrow G^\wedge \rightarrow G \rightarrow G^\et \rightarrow 0 \]
induces an isomorphism of $M \otimes (\mbb{Q}_p/\mbb{Z}_p)$ with $G^\et$, and this is the extension of $G^\wedge$ determined by the map $M \hookrightarrow G^\wedge$. 
\end{example}

\begin{remark}\label{remark.rig-BKF}
The category of rigidified Breuil--Kisin--Fargues modules (see \cite{anschutz:rigidBKFmodules}, \cite[\S4]{bhatt-morrow-scholze:integral-p-adic-ht} provides a natural category of cohomological motives over $\mbb{C}_p$ (for example, for a smooth proper formal scheme over $\mc{O}_{\mbb{C}_p}$ with torsion-free crystalline cohomology, there is a rigidified Breuil--Kisin--Fargues module in each cohomological degree that recovers all other $p$-adic cohomology theories and their comparisons). The category of $p$-divisible groups over $\mc{O}_{\mbb{C}_p}$ is equivalent to the full sub-category of the category of Breuil--Kisin--Fargues modules with slopes in $[0,1]$, but if $H_{\overline{\mbb{F}}_p}$ is not isoclinic then there is no canonical choice of a rigidification for $H$ --- a rigidification here amounts to the choice of an isogeny $H_{\overline{\mbb{F}}_p} \times_{\overline{\mbb{F}}_p} \mc{O}_{\mbb{C}_p}/p \rightarrow H_{\mc{O}_{\mbb{C}_p}/p}$ inducing the identity modulo $\mf{m}_{\mbb{C}_p}$. If $H$ is connected and equipped with a rigidification, then a rigidified extension as considered in this section is exactly an extension in the category of rigidified Breuil--Kisin--Fargues modules. The natural category analogous to $1$-motives here is the category of rigidified Breuil--Kisin--Fargues modules with slopes in $[0,1]$, and \cref{theorem.canonical-rigidifications}-(2) expresses the fact that the choice of a rational structure over a discretely valued subfield induces a canonical rigidification.
\end{remark}

\section{Proof of main theorem}
\label{s.proof}
In this section we construct the map $\overline{I}$ and prove \cref{theorem.main}.

\subsection{Construction of the  map $\overline{I}$.}
\label{ss.construction-of-map}
We construct $\overline{I}$ as the composition of the following two homomorphisms:
\begin{enumerate}
\item The homomorphism $\mc{E}\colon G(\mc{O}_K) \rightarrow \Ext(\mbb{Q}_p/\mbb{Z}_p, G[p^\infty])$, $x \mapsto \mc{E}_x[p^\infty]$, where
\[ \mc{E}_x\colon 0 \rightarrow G \rightarrow G_x \rightarrow \mbb{Q}_p/\mbb{Z}_p \rightarrow 0 \]
is the extension attached to $\varphi\colon \mbb{Z} \rightarrow G$,  $1 \mapsto x$ by the construction of \cref{s.rig-ext}. 
\item The homomorphism $\psi\colon \Ext(\mbb{Q}_p/\mbb{Z}_p, G[p^\infty])\rightarrow (\Lie G \otimes_{\mc{O}_K} C(1))/T_p G$ sending 
\[ 0 \rightarrow G[p^\infty] \rightarrow H \rightarrow \mbb{Q}_p/\mbb{Z}_p \rightarrow 0 \]
to the image of $1 \in \mbb{Z}_p$ under the right vertical arrow of the diagram induced by applying the canonical splitting of the Hodge--Tate filtration (\cref{eq.tate-splitting}) in the left two terms, 
\[\begin{tikzcd}[column sep = 1em]
	0 & {T_p G} & {T_p H} & {\mbb{Z}_p} & 0 \\
	0 & {\Lie G \otimes_{\mc{O}_K} C(1)} & {\Lie G \otimes_{\mc{O}_K} C(1)} & {(\Lie G \otimes_{\mc{O}_K} C(1)) / T_p G} & {}
	\arrow[from=1-4, to=2-4]
	\arrow[from=1-2, to=2-2]
	\arrow[from=1-3, to=2-3]
	\arrow[from=2-1, to=2-2]
	\arrow[from=1-1, to=1-2]
	\arrow[from=1-2, to=1-3]
	\arrow[from=2-2, to=2-3, equal]
	\arrow[from=1-3, to=1-4]
	\arrow[from=2-3, to=2-4]
	\arrow[from=1-4, to=1-5]
\end{tikzcd}\]
\end{enumerate}

\begin{remark}\label{remark.canonical-root}
Note that the map $\mbb{Z}[1/p] \rightarrow G_x$ extending $-\varphi$ in the push-out construction of $G_x$ gives rise to a canonical system of $p$-power roots of $-x$ in $G_x(\mc{O}_K)$ via the images of $1/p^n$. We compile these as an element $\widetilde{-x}_\can$ of $\widetilde{G}_x(\mc{O}_K)$ lifting $-x \in G(\mc{O}_K) \subset G_x(\mc{O}_K)$ and projecting to $1 \in \mbb{Q}_p= \widetilde{\mbb{Q}_p/\mbb{Z}_p}=\Hom(\mbb{Z}[1/p], \mbb{Q}_p/\mbb{Z}_p)$. In the explicit coordinates of \cref{eq.carrying-law},  \[\widetilde{-x}_\can :=((-x,0), (0,1/p), (0,1/p^2),\dots).\] 
When $G$ is a $p$-divisible group, $\widetilde{-x}_\can=s_\varphi(1)$ for $s_\varphi$ the canonical rigidification from \cref{s.rig-ext}.
We will use the element $\widetilde{-x}_\can$ in our comparison with other constructions in \cref{s.integration-universal-cover-other-constructions}. 
\end{remark}

\subsection{Proof of \cref{theorem.main}}
In this section, we prove \cref{theorem.main}. 
To begin, we establish two lemmas concerning the maps $\psi$ and $\mc{E}$ defined in \cref{ss.construction-of-map}

\begin{lemma}If $\kappa$ is algebraically closed, then the map $\psi$ is injective.	
\end{lemma}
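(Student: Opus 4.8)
The plan is to show that $\ker\psi$ is trivial, which gives injectivity since $\psi$ is a homomorphism of abelian groups. So let $\mc{E}\colon 0\to G[p^\infty]\to H\to \mbb{Q}_p/\mbb{Z}_p\to 0$ be an extension of $p$-divisible groups over $\mc{O}_K$ with $\psi(\mc{E})=0$; I will show $\mc{E}$ is split, hence trivial in $\Ext(\mbb{Q}_p/\mbb{Z}_p,G[p^\infty])$. Throughout, the standing hypothesis $T_pG(K^\ur)=0$ is in force (and $\kappa$ algebraically closed forces $K^\ur=K$).

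First I would unwind the definition of $\psi$. Taking Tate modules gives $0\to T_pG\to T_pH\to \mbb{Z}_p\to 0$, and since $H$ is an extension of an \'{e}tale $p$-divisible group by $G[p^\infty]$ we have $\Lie H=\Lie G$; write $\sigma_H\colon T_pH\to \Lie G\otimes_{\mc{O}_K}C(1)$ for the Hodge--Tate splitting \cref{eq.tate-splitting} of $H$. By functoriality of \cref{eq.tate-splitting}, the restriction of $\sigma_H$ to $T_pG$ is the Hodge--Tate splitting $\sigma_G$ of $G$, which is injective because $T_pG(K^\ur)=0$. Chasing the diagram defining $\psi$, for any lift $t\in T_pH$ of $1\in\mbb{Z}_p$ one has $\psi(\mc{E})=\sigma_H(t)\bmod \sigma_G(T_pG)$; replacing $t$ by $t-g$ for a suitable $g\in T_pG$ shows that $\psi(\mc{E})=0$ is equivalent to the existence of a lift $t$ of $1$ lying in $\ker\sigma_H$.

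Now I would invoke the algebraically closed hypothesis. Since $K^\ur=K$, the discussion following \cref{eq.tate-splitting} identifies $\ker\sigma_H$ with $T_pH(\mc{O}_{K^\ur})=T_pH(\mc{O}_K)$, the Tate module of the maximal \'{e}tale $p$-divisible subgroup of $H$ over $\mc{O}_K$; in particular $\ker\sigma_H$ is a $\Gal(\overline{K}/K)$-stable submodule of $T_pH$. Its intersection with $T_pG$ is $\ker\sigma_G=T_pG(\mc{O}_K)=0$, so $\ker\sigma_H$ maps injectively and $\Gal(\overline{K}/K)$-equivariantly into $\mbb{Z}_p$; as the Galois action on $\mbb{Z}_p$ is trivial, so is the action on $\ker\sigma_H$. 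When $\psi(\mc{E})=0$, this injection $\ker\sigma_H\hookrightarrow\mbb{Z}_p$ also hits $1$, hence is an isomorphism, and its inverse is a $\Gal(\overline{K}/K)$-equivariant section of $0\to T_pG\to T_pH\to\mbb{Z}_p\to 0$. Finally, by the full faithfulness of $H\mapsto T_pH$ on $p$-divisible groups over $\mc{O}_K$ (equivalently, the equivalence between $p$-divisible groups and lattices in crystalline representations with Hodge--Tate weights in $\{0,1\}$ recalled in the introduction), this section corresponds to a map $\mbb{Q}_p/\mbb{Z}_p\to H$ over $\mc{O}_K$ whose composite with $H\to\mbb{Q}_p/\mbb{Z}_p$ is the endomorphism of $\mbb{Q}_p/\mbb{Z}_p$ with Tate module the identity, i.e.~the identity; thus $\mc{E}$ is split.

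The step I expect to need the most care is the identification $\ker\sigma_H=T_pH(\mc{O}_{K^\ur})$ together with the verification that, under $T_pG(K^\ur)=0$, this kernel embeds $\Gal(\overline{K}/K)$-equivariantly into $\mbb{Z}_p$. This is precisely where the algebraically closed hypothesis is used: it makes $K^\ur=K$ so that $\ker\sigma_H\cap T_pG=T_pG(\mc{O}_K)=0$; without it this intersection can be nonzero, which is exactly why the extra condition (c) of \cref{theorem.main} is needed in general. Promoting the Galois-equivariant section to an actual splitting over $\mc{O}_K$ is routine given Tate's theorem, and the remainder is diagram-chasing.
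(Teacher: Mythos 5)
Your proposal is correct and uses essentially the same approach as the paper: both reduce to showing that a lift of $1$ lying in $\ker\sigma_H$ is Galois-fixed, via the observation that this kernel is crystalline of Hodge--Tate weight zero, hence unramified, hence trivial since $\kappa$ is algebraically closed. The paper phrases this by taking the $\mbb{Q}_p$-span of the Galois orbit of the chosen lift, whereas you invoke the identification $\ker\sigma_H = T_pH(\mc{O}_{K^\ur}) = T_pH(\mc{O}_K)$ from the introduction; your additional step establishing $\ker\sigma_H\cong\mbb{Z}_p$ via $T_pG(\mc{O}_K)=0$ is not needed --- once the lift $t$ of $1$ is known to lie in $T_pH(\mc{O}_K)$, it already gives the splitting $\mbb{Q}_p/\mbb{Z}_p\to H$ directly, regardless of the size of $\ker\sigma_H$.
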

\begin{proof}
Suppose we have an extension such that $1 \mapsto 0$. That means there a pre-image $v$ of $1$ in $T_p H$ such that $v$ maps to zero in $\Lie H(1)$. The $\mbb{Q}_p$-span $M$ of the Galois orbit of $v$ is thus contained in the kernel	of this map, so $M \otimes C \subset \omega_{G[p^\infty]^\vee} \otimes_{\mc{O}_K} C \subset T_p H \otimes C$. Thus $M$ is of Hodge--Tate weight zero and crystalline (as a subrepresentation of $V_p H$), so the Galois action is trivial since $\kappa$ is algebraically closed. Thus we obtain a splitting
\[ v \in M \subseteq T_p H(K)=\Hom(\mbb{Q}_p/\mbb{Z}_p, H[p^\infty]). \]
\end{proof}

\begin{lemma} If $\kappa$ is algebraically closed, then $\mr{Ker}(\mc{E}) = G(\mc{O}_K)^{\pdiv}.$
\end{lemma}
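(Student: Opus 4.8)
Show that for $G$ a $p$-divisible group or $p$-adic formal semi-abelian scheme over $\mathcal{O}_K$ with $\kappa$ algebraically closed, $\mathrm{Ker}(\mathcal{E}) = G(\mathcal{O}_K)^{p\text{-div}}$.

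The plan is to unwind the definition of $\mathcal{E}(x) = \mathcal{E}_x[p^\infty]$ and use that this extension of $p$-divisible groups is split precisely when $x$ is $p$-divisible. First I would recall from \cref{s.rig-ext} the push-out construction of $G_x$ from $\varphi\colon \mathbb{Z}\to G$, $1\mapsto x$, and the associated extension $\mathcal{E}_x\colon 0\to G\to G_x\to \mathbb{Q}_p/\mathbb{Z}_p\to 0$. I claim that $\mathcal{E}_x[p^\infty]$ is split as an extension of $p$-divisible groups over $\mathcal{O}_K$ if and only if $x\in G(\mathcal{O}_K)^{p\text{-div}}$. The direction ``$x$ $p$-divisible $\Rightarrow$ split'' is essentially formal: a compatible system of $p$-power roots of $x$ (equivalently, by \cref{remark.canonical-root}, of $-x$ after adjusting) produces, via the explicit carrying-law coordinates of \cref{eq.carrying-law}, a splitting of $\mathcal{E}_x$ on $\mathcal{O}_K$-points, which by functoriality and the description \cref{eq.pntorsion} of the $p^n$-torsion upgrades to a splitting of the extension of $p$-divisible groups. (Here it is cleanest to observe that a lift of $\varphi$ to $\varphi'\colon \mathbb{Z}[1/p]\to G$ induces via the push-out property an isomorphism of $\mathcal{E}_x$ with the trivial extension — this is exactly the mechanism of \cref{lemma.rig-kernel}, and a $p$-divisible element of $G(\mathcal{O}_K)$ is by definition such a lift.)

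For the converse I would argue: suppose $\mathcal{E}_x[p^\infty]$ is split over $\mathcal{O}_K$, i.e. there is a section $\sigma\colon \mathbb{Q}_p/\mathbb{Z}_p\to G_x[p^\infty]$ of $p$-divisible groups over $\mathcal{O}_K$. Evaluating on $\mathcal{O}_{\overline{K}}$-points (or passing to Tate modules) and using \cref{eq.pntorsion}, the section picks out, for each $n$, a point of $G_x[p^n]$ lying over $p^n\cdot(1/p^n) = 1 \in \mathbb{Z}/p^n$, i.e. lying in the fiber of $[p^n]$ on $G$ above $\varphi(1) = x$; compatibility of the $\sigma$'s under the transition maps says precisely that these assemble to a compatible system of $p$-power roots of $x$ in $G(\mathcal{O}_K)$ — more precisely of $-x$, matching the sign in \cref{eq.push-out-diagram}, but $-x$ is $p$-divisible iff $x$ is. Hence $x\in G(\mathcal{O}_K)^{p\text{-div}}$. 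A slightly slicker packaging: the canonical rigidification $s_\varphi$ always exists (\cref{remark.canonical-root}, $\widetilde{-x}_{\mathrm{can}} = s_\varphi(1)$), so by \cref{theorem.rig-ext-iso} and \cref{lemma.rig-kernel} the class $\mathcal{E}(x)$ in $\Ext(\mathbb{Q}_p/\mathbb{Z}_p, G[p^\infty])$ vanishes iff the map $\varphi$ lifts to $\Hom(\mathbb{Q}_p, G[p^\infty])$ — equivalently (since $\varphi$ need not factor through $G[p^\infty]$, one works with $\tilde G$ and \cref{remark.isogeny-universal-cover}) iff $x$ admits a compatible system of $p$-power roots in $G(\mathcal{O}_K)$, i.e. lies in the image of $\tilde G(\mathcal{O}_K)\to G(\mathcal{O}_K)$, which is the definition of $G(\mathcal{O}_K)^{p\text{-div}}$.

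The main obstacle is bookkeeping rather than depth: one must be careful that ``split as an extension of $p$-divisible groups over $\mathcal{O}_K$'' is genuinely equivalent to ``$\varphi$ lifts along $\mathbb{Z}\hookrightarrow\mathbb{Q}_p$ into (the universal cover of) $G$'', and in particular that the algebraically-closed residue field hypothesis is what guarantees there is no discrepancy — a priori \cref{lemma.rig-kernel} only describes the kernel of $\Hom(M,H)\to\Ext(M\otimes\mathbb{Q}_p/\mathbb{Z}_p, H)$ \emph{on the nose}, while we have introduced the rigidification $s_\varphi$; invoking \cref{theorem.canonical-rigidifications}(1), for $\kappa$ algebraically closed every extension can be rigidified and the forgetful map $\RigExt\to\Ext$ is surjective, so the distinction between splitting $\mathcal{E}_x$ and splitting the rigidified $(\mathcal{E}_x, s_\varphi)$ collapses and the identification of $\mathrm{Ker}(\mathcal{E})$ with $G(\mathcal{O}_K)^{p\text{-div}}$ follows. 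One should also note explicitly that this reduction is where the formal semi-abelian case and the $p$-divisible case are handled uniformly, since in both $\tilde G(\mathcal{O}_K) = \tilde G(\kappa)$ by \cref{example.fontaine-p-adic}.
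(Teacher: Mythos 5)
Your core argument is correct and takes a genuinely more direct route than the paper. You go for the equivalence ``$\mc{E}_x[p^\infty]$ split $\iff x\in G(\mc{O}_K)^{\pdiv}$'' head on: the forward direction is the mechanism of \cref{lemma.rig-kernel}, and for the converse you unwind a section $\sigma\colon\mbb{Q}_p/\mbb{Z}_p\to G_x[p^\infty]$ over $\mc{O}_K$ via \cref{eq.pntorsion}, so that the points $\sigma(1/p^n)\in G_x[p^n](\mc{O}_K)$ lie in the fiber of $[p^n]$ over $\varphi(1)=x$ and assemble into a system of $p$-power roots of $x$ in $G(\mc{O}_K)$. (Minor: these are roots of $x$, not of $-x$ --- \cref{eq.pntorsion} gives $p^ng=\varphi(p^n\cdot 1/p^n)=x$ --- but as you note the sign is immaterial.) The paper's own proof is structurally quite different: it first establishes surjectivity of $\mc{E}|_{G[p^\infty](\mc{O}_K)}$ using \cref{theorem.canonical-rigidifications}, identifies $\Ext(\mbb{Q}_p/\mbb{Z}_p,G[p^\infty])$ with $G[p^\infty](\mc{O}_K)/G[p^\infty](\mc{O}_K)^{\pdiv}$ via \cref{lemma.rig-kernel}, uses that $G[p^\infty](\mc{O}_K)^{\pdiv}$ is a divisible direct summand to deduce $\Ext$ has no nonzero $p$-divisible elements (hence $G(\mc{O}_K)^{\pdiv}\subseteq\ker\mc{E}$), and closes with the amalgamated-sum decomposition $G(\mc{O}_K)=G[p^\infty](\mc{O}_K)\sqcup_{G[p^\infty](\mc{O}_K)^{\pdiv}}G(\mc{O}_K)^{\pdiv}$. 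That strategy uses $\kappa$ algebraically closed twice (for surjectivity and for the decomposition), whereas your direct argument, carried out carefully, does not actually need the hypothesis.

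Your closing paragraph, however, is confused and should be dropped. The worry you raise --- that \cref{lemma.rig-kernel} only governs the map $\Hom(M,H)\to\Ext$ for $H$ a $p$-divisible group, whereas $\mc{E}$ is defined on all of $G(\mc{O}_K)$ --- is a fair one, but the resolution you propose is a non-sequitur. Surjectivity of $\RigExt\to\Ext$ from \cref{theorem.canonical-rigidifications}(1) does not make ``splitting $\mc{E}_x$'' and ``splitting the rigidified $(\mc{E}_x,s_\varphi)$'' coincide; by \cref{theorem.rig-ext-iso} the latter holds iff $\varphi=0$, while by \cref{lemma.rig-kernel} the former holds iff $\varphi$ lifts to $\Hom(M\otimes\mbb{Q}_p,H)$, and these are very different conditions. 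What you actually need --- and what your earlier paragraphs already supply --- is the observation that the push-out argument proving \cref{lemma.rig-kernel} works verbatim with $\varphi\colon\mbb{Z}\to G$ valued in $G$ rather than in $G[p^\infty]$, identifying $\ker\mc{E}$ directly with the image of $\tilde{G}(\mc{O}_K)\to G(\mc{O}_K)$, i.e.~with $G(\mc{O}_K)^{\pdiv}$; no detour through rigidification bookkeeping or through the residue-field hypothesis is required.
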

\begin{proof} We first observe that if $\kappa$ is algebraically closed, then by \cref{theorem.canonical-rigidifications}, $\mc{E}_{|_{G[p^\infty](\mc{O}_K)}}$ is surjective. Then by \cref{lemma.rig-kernel} it induces
\begin{equation}\label{eq.lemma-ext-quot} G[p^\infty](\mc{O}_K) / G[p^\infty](\mc{O}_K)^{\pdiv} = \Ext(\mbb{Q}_p/\mbb{Z}_p, G[p^\infty]). \end{equation}
Since $G[p^\infty](\mc{O}_K)^{\pdiv}$ is a divisible $\mbb{Z}$-module (it is a $\mbb{Z}_p$-module and $p$-divisible), it is injective and thus a direct summand of the $\mbb{Z}$-module $G[p^\infty](\mc{O}_K)$. Thus \cref{eq.lemma-ext-quot} implies $\Ext(\mbb{Q}_p/\mbb{Z}_p, G[p^\infty])$ has no non-zero $p$-divisible elements, so we conclude $G(\mc{O}_K)^{\pdiv}$ is contained in the kernel of $\mc{E}$. On the other hand, since every element of $G(\kappa)$ is $p$-divisible thus admits a lift to a $p$-divisible element of $G(\mc{O}_K)$ by \cref{example.fontaine-p-adic}, we have the factorization as an amalgamated sum
\[ G(\mc{O}_K) = G[p^\infty](\mc{O}_K) \sqcup_{G[p^\infty](\mc{O}_K)^{\pdiv}} G(\mc{O}_K)^{\pdiv}. \]
Combined with \cref{eq.lemma-ext-quot}, we conclude the kernel is identically $G(\mc{O}_K)^{\pdiv}$. 
\end{proof}

Thus, for a general $K$, we find the kernel of $\overline{I}$ is $G(\mc{O}_{(K^\ur)^\wedge})^{\pdiv} \cap G(\mc{O}_K).$ The claim about the kernel in \cref{theorem.main} then follows from
\begin{lemma}\label{lemma.p-div-field-of-def} If $T_p G(K^\ur)=0$, then  
\[  G(\mc{O}_{(K^\ur)^\wedge})^{\pdiv} \cap G(\mc{O}_{K}) = G(\mc{O}_K)^{\pdiv} \]
\end{lemma}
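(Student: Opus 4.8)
The plan is to reduce the statement to a question about Galois fixed points that is resolved using the hypothesis $T_pG(K^\ur)=0$. The inclusion $\supseteq$ is trivial since $\mc{O}_K \subseteq \mc{O}_{(K^\ur)^\wedge}$; the content is $\subseteq$. So suppose $x\in G(\mc{O}_K)$ is $p$-divisible in $G(\mc{O}_{(K^\ur)^\wedge})$; I want to show it is already $p$-divisible in $G(\mc{O}_K)$.

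First I would exploit \cref{prop.fontaine-construction} (the Fontaine construction) to replace the integral rings by their residue fields: reduction induces $\widetilde{G}(\mc{O}_{(K^\ur)^\wedge}) = \widetilde{G}(\overline{\kappa})$ and $\widetilde{G}(\mc{O}_K)=\widetilde{G}(\kappa)$, and likewise $\widetilde G(\mc O_{K^\ur})=\widetilde G(\overline\kappa)$ with $\mc{O}_{K^\ur}\to\overline\kappa$ (here $K^\ur$ itself need not be complete, but its valuation ring is Henselian with residue field $\overline\kappa$, so the argument of \cref{prop.fontaine-construction}, which only uses nilpotence mod $p^k$ and the Drinfeld/Katz bound, still applies; alternatively pass through the completion, noting $T_pG(K^\ur)=0 \Leftrightarrow T_pG((K^\ur)^\wedge)=0$ since the Tate module is a finitely generated $\mathbb Z_p$-module on which the action factors through a quotient detected already over $K^\ur$). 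Thus an element of $G(\mc{O}_K)$ is $p$-divisible in $G(\mc{O}_{(K^\ur)^\wedge})$ iff its reduction $\bar x\in G(\kappa)$ lifts to $\widetilde G(\overline\kappa)$, and it is $p$-divisible in $G(\mc{O}_K)$ iff $\bar x$ lifts to $\widetilde G(\kappa)$. The hypothesis $T_pG(K^\ur)=0$ gives $T_pG(\overline\kappa)=0$ (again because $T_pG(\overline\kappa)=T_pG(\mc O_{\overline K})$ is the Tate module of the \'etale part, whose points over $\overline\kappa$ are the same as over the separable closure $\kappa^{\mathrm{sep}}\subseteq K^\ur$-residue field), so the lift to $\widetilde G(\overline\kappa)$, if it exists, is \emph{unique}. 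Therefore it is automatically $\Gal(\overline\kappa/\kappa)$-equivariant, hence lies in $\widetilde G(\overline\kappa)^{\Gal(\overline\kappa/\kappa)}=\widetilde G(\kappa)$ by Galois descent for the pro-finite-\'etale (indeed, $\lim G[p^n]$ followed by $\lim_p G$) functor $\widetilde G$.

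The one point needing care — and the step I expect to be the main obstacle — is the identification $\widetilde G(\overline\kappa)^{\Gal(\overline\kappa/\kappa)}=\widetilde G(\kappa)$, i.e.\ faithfully flat / Galois descent for the functor $\widetilde G = \lim(G\xleftarrow{p}G\xleftarrow{p}\cdots)$ along $\kappa\to\overline\kappa$. For the connected part $G^\wedge$ this follows from descent for the formal group and the fact that $\widetilde{G^\wedge}$ is a filtered limit of affine schemes; for the \'etale part, over the perfect field $\kappa$ the connected–\'etale sequence splits, and $\widetilde{G^\et}(\overline\kappa) = V_pG(\overline\kappa)$ with its Galois action, whose invariants are $V_pG(\kappa^{\mathrm{sep}})^{\Gal}=V_pG(\kappa)$ — but since we have arranged $T_pG(\overline\kappa)=0$, the \'etale part contributes nothing and $\widetilde G(\overline\kappa)=\widetilde{G^\wedge}(\overline\kappa)$, so only the (straightforward) connected case is actually used. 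Assembling: $\bar x$ lifts uniquely to $\widetilde{G^\wedge}(\overline\kappa)$, the lift is $\Gal$-invariant by uniqueness, hence descends to $\widetilde{G^\wedge}(\kappa)\subseteq \widetilde G(\kappa)$, which exactly says $x\in G(\mc O_K)^{\pdiv}$. This completes the proof, and combined with the preceding lemmas identifies $\mr{Ker}(\overline I)=G(\mc O_K)^{\pdiv}$ as claimed in \cref{theorem.main}(1).
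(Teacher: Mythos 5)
Your overall strategy -- uniqueness of the lift forces Galois invariance, which then gives descent from the unramified closure down to $K$ -- is the same as the paper's, but your detour through the residue field via \cref{prop.fontaine-construction} introduces a genuine error. You assert ``$T_pG(K^\ur)=0$ gives $T_pG(\overline\kappa)=0$, because $T_pG(\overline\kappa)=T_pG(\mc O_{\overline K})$ is the Tate module of the \'etale part.'' Both halves of this are wrong: $T_pG(\mc O_{\overline K})=T_pG(\overline K)$ is the \emph{full} Tate module (of rank equal to the height of $G$, by properness of $G[p^n]$), not the Tate module of the \'etale part; and $T_p G(\overline\kappa)$, which \emph{is} the Tate module of the \'etale part of the special fiber, is generically nonzero even when $T_pG(\mc O_{K^\ur})=0$. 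The paper points this out explicitly in \cref{example.fontaine-p-adic}: for a generic lift of $G_\kappa$ no copy of $\mbb{Q}_p/\mbb{Z}_p$ lifts to a subgroup of $G$, so $T_pG(\mc O_K)=0$ while $T_pG(\kappa)\neq 0$. Your argument therefore invokes a hypothesis strictly stronger than what is given, and collapses exactly in the interesting case where the connected--\'etale sequence of $G$ does not split.

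There is a second, related, slip: the ``iff'' statements replacing ``$x$ is $p$-divisible in $G(\mc O_{(K^\ur)^\wedge})$'' by ``$\bar x$ lifts to $\widetilde G(\overline\kappa)$'' (using the $\overline\kappa$-projection of $\widetilde G$) lose information, since $\bar x$ does not determine $x$. If $\bar x$ lifts to $\widetilde G(\overline\kappa)$, Fontaine-lifting that element to $\widetilde G(\mc O_{(K^\ur)^\wedge})$ and projecting to $G(\mc O_{(K^\ur)^\wedge})$ produces some $p$-divisible $x'$ with $\overline{x'}=\bar x$, not $x$ itself. So even granting uniqueness over $\overline\kappa$, the conclusion would be that \emph{some} point with the same reduction is $p$-divisible in $G(\mc O_K)$, not that $x$ is. The correct uniqueness to invoke is uniqueness of the system of $p$-power roots \emph{inside} $G(\mc O_{(K^\ur)^\wedge})$, which is precisely $T_pG((K^\ur)^\wedge)=0$ (equivalently $T_pG(K^\ur)=0$, since $K^\ur$ is dense in $(K^\ur)^\wedge$ and the torsion subschemes $G[p^n]$ are finite). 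The paper works directly at this level: Galois equivariance of the unique $p$-power roots over $\mc O_{(K^\ur)^\wedge}$ under $\Gal(K^\ur/K)\cong\Gal(\overline\kappa/\kappa)$, whose fixed ring is $\mc O_K$, immediately gives $x^{1/p^n}\in G(\mc O_K)$. If you drop the residue-field translation and run your Galois/uniqueness argument over $\mc O_{(K^\ur)^\wedge}$ itself, you recover the paper's proof.
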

\begin{proof}
Since $T_p G(K^\ur)=T_p G((K^\ur)^\wedge)=0$, any $p$-divisible element is uniquely $p$-divisible. Thus, by considering the Galois action, we find that if 
\[ x \in G(\mc{O}_K) \cap G(\mc{O}_{(K^\ur)^\wedge})^{\pdiv} \]
 then $x^{1/p^n}$ is also in $G(\mc{O}_K)$ for any $n$ so $x \in G(\mc{O}_K)^{\pdiv}$. 
\end{proof}

It remains to show the crystalline characterization of the image in \cref{theorem.main}. On the one hand, any point in the image of $G[p^\infty](\mc{O}_K)$ satisfies $(a)-(c)$ because the Tate module of any $p$-divisible group is crystalline and from the construction the extensions are rigidified. Conversely, suppose given $y$ satisfying $(a)-(c)$. Then the lattice $T_y$ in the crystalline representation $V_y$ has Hodge--Tate weights zero and thus comes from a $p$-divisible group $H$. By full-faithfulness of the Tate module, the extension 
\[ 0 \rightarrow T_p G[p^\infty] \rightarrow T_p H \rightarrow \mbb{Z}_p \rightarrow 0 \]
comes from a diagram of $p$-divisible groups
\[ 0 \rightarrow G[p^\infty] \rightarrow H \rightarrow \mbb{Q}_p/\mbb{Z}_p \rightarrow 0 \]
which is an extension.
It admits a rigidification by condition (c), so comes from a point in $G[p^\infty](\mc{O}_K)$, thus we find $y$ is in the image of $\overline{I}$. 
This concludes the proof of \cref{theorem.main}.

\section{Integration on the universal cover and other constructions}\label{s.integration-universal-cover-other-constructions}

For $G$ a $p$-divisible group or a $p$-adic formal semi-abelian scheme over $\mc{O}_K$, in this section we write $\overline{I}_G$ for the integration map defined in \cref{ss.construction-of-map} to emphasize the dependence on $G$. The integration map $\overline{I}_G$ is induced by a homomorphism 
\begin{equation}\label{eq.fontaine-integral} I_G\colon \tilde{G}(\mc{O}_C) \times_{G(\mc{O}_C)} G(\mc{O}_K) \rightarrow \Lie G \otimes_{\mc{O}_K} C(1) \end{equation}
which we define now: recall from \cref{remark.canonical-root} that, given $x \in G(\mc{O}_K)$ we have a canonical compatible systems of $p$-power roots of $-x$, $\widetilde{-x}_\can \in \tilde{G}_x(\mc{O}_K)$. Given a compatible system of $p$-power roots of $x$, $\tilde{x} \in \tilde{G}(\mc{O}_C)$, the element $\tilde{x} + (\widetilde{-x}_\can)$ lies in $T_p G_x (\mc{O}_C)$, and we define $I(\tilde{x})$ to be the image of $\tilde{x} + (\widetilde{-x}_\can)$ in $\Lie G \otimes_{\mc{O}_K} C(1)=\Lie G_x \otimes_{\mc{O}_K} C(1)$ under the canonical splitting of the Hodge--Tate filtration. It is immediate from the construction that this lifts $\overline{I}_G$. 

In \cref{ss.fontaine-integration} and \cref{ss.deRham-comparison-construction}, we explain two other ways to construct the map $I_G$. The first is via Fontaine integration as in \cite{imz}, while the second passes through the crystalline incarnation of the universal cover of a $p$-divisible group as in \cite{scholze-weinstein:p-div}. That all three constructions agree is a consequence of the following uniqueness result:

\begin{theorem}\label{theorem.uniqueness}
Let $\mc{C}$ be the full subcategory of group-valued functors on $\mr{Nilp}_{\mc{O}_K}$ consisting of functors represented by $p$-divisible groups over $\mc{O}_K$ or by $p$-adic formal abelian schemes over $\mc{O}_K$. The natural transformation of functors from $\mc{C}$ to abelian groups
\[ G \mapsto I_G\colon \tilde{G}(\mc{O}_C)\times_{G(\mc{O}_C)}G(\mc{O}_{K})\rightarrow \Lie G\otimes_{\mc{O}_K} C(1) \]
is the unique natural transformation that is Galois equivariant and agrees with the canonical splitting of the Hodge--Tate filtration on 
\[ T_pG(\mc{O}_C) \subset \tilde{G}(\mc{O}_C)\times_{G(\mc{O}_C)}G(\mc{O}_{K}).\]
\end{theorem}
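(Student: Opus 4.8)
The plan is to prove uniqueness by reducing, via functoriality and the universal property of the extensions $\mc{E}_x$, to a statement about the Tate module where the hypothesis applies directly. First I would observe that for $x \in G(\mc{O}_K)$ and a choice of $p$-power root system $\tilde{x} \in \tilde{G}(\mc{O}_C)$, the canonical section $\widetilde{-x}_\can \in \tilde{G}_x(\mc{O}_K)$ produces the element $\tilde{x} + \widetilde{-x}_\can \in T_p G_x(\mc{O}_C)$, and this is functorial in the $1$-motive $\varphi \colon \mbb{Z} \xrightarrow{1 \mapsto x} G$ in the evident sense: a morphism of $p$-divisible groups (or formal semi-abelian schemes) $f \colon G \to G'$ sends the pair $(x, \tilde x)$ to $(f(x), f(\tilde x))$ and intertwines the canonical roots, since the latter are characterized by the push-out property (\cref{remark.canonical-root}). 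The key structural input is that the inclusion $G[p^\infty] \hookrightarrow G_x[p^\infty]$ realizes, after the substitution $G \rightsquigarrow G_x$, the element $\tilde x + \widetilde{-x}_\can$ as landing in the sub-functor $T_p G_x(\mc{O}_C)$ on which any admissible natural transformation is \emph{forced} to equal the Hodge--Tate splitting.

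The main step is then: given any Galois-equivariant natural transformation $J_{(-)}$ agreeing with the canonical splitting of \cref{eq.tate-splitting} on $T_p(-)(\mc{O}_C)$, show $J_G = I_G$ for all $G \in \mc{C}$. Fix $x \in G(\mc{O}_K)$ and $\tilde x \in \tilde G(\mc{O}_C)$ over it. Apply naturality to the closed immersion $\iota \colon G \hookrightarrow G_x$ of group-valued functors on $\mr{Nilp}_{\mc{O}_K}$ (note $G_x$ lies in $\mc{C}$ when $G$ does: it is a $p$-divisible group, resp.\ a $p$-adic formal semi-abelian scheme, by the constructions of \cref{s.p-div-1-mot}); this gives a commuting square relating $J_G$ on $\tilde G(\mc{O}_C) \times_{G(\mc{O}_C)} G(\mc{O}_K)$ to $J_{G_x}$ on the analogous fiber product for $G_x$. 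Under $\iota$, the element $(\tilde x, x)$ maps to $(\iota(\tilde x), \iota(x))$, but inside $\tilde G_x(\mc{O}_C) \times_{G_x(\mc{O}_C)} G_x(\mc{O}_K)$ we may instead use the element $\iota(\tilde x) + \widetilde{-x}_\can$, which lies over $\iota(x) - x = 0 \in G_x(\mc{O}_K)$ — wait, more precisely one uses that $\iota(\tilde{x}) + \widetilde{-x}_\can \in T_p G_x(\mc{O}_C)$, and on $T_p G_x(\mc{O}_C)$ we have $J_{G_x} = $ (Hodge--Tate splitting) $= I_{G_x}$ by hypothesis. Additivity of $J_{G_x}$ (it is a homomorphism of abelian groups) together with $J_{G_x}(\widetilde{-x}_\can) = 0$ — because $\widetilde{-x}_\can \in \tilde G_x(\mc{O}_K)$ lies over $-x \in G(\mc{O}_K) \subset G_x(\mc{O}_K)$, not obviously in $T_pG_x$...

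Let me restructure: the clean argument is that $J_{G_x}$ and $I_{G_x}$ are both homomorphisms on $\tilde G_x(\mc{O}_C) \times_{G_x(\mc{O}_C)} G_x(\mc{O}_K)$ agreeing on $T_pG_x(\mc{O}_C)$, and the element $\tilde x + \widetilde{-x}_\can$ built from $x$ lies in that fiber product and maps to $\Lie G_x \otimes C(1) = \Lie G \otimes C(1)$ the same way under both; by the definition of $I_G(\tilde x)$ as the image of $\tilde x + \widetilde{-x}_\can$ under the splitting, and by naturality along $\iota$ identifying $J_G(\tilde x, x)$ with $J_{G_x}$ evaluated on the image of $(\tilde x, x)$ corrected by the canonical root — this identification itself being forced since $\widetilde{-x}_\can$ is in the image of the natural map from the universal cover of the point-functor and $J$ is natural and Galois-equivariant, so $J_G(\widetilde{-x}_\can$-type corrections$)$ are pinned down — one gets $J_G(\tilde x, x) = I_G(\tilde x, x)$. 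The hardest part will be formulating precisely the naturality diagram for the substitution $G \rightsquigarrow G_x$ and verifying that the correction term $\widetilde{-x}_\can$ is handled correctly by \emph{any} admissible $J$, i.e.\ that its contribution is determined — this rests on the fact that $\widetilde{-x}_\can - \tilde{x}'$ for any two systems of roots lies in $T_p G$, reducing everything to the fiber product's behavior over varying roots, where the hypothesis on $T_p(-)(\mc{O}_C)$ and Galois equivariance (to pass between $\mc{O}_C$ and $\mc{O}_K$ fibers, noting $\tilde x + \widetilde{-x}_\can$ is Galois-stable mod $T_pG$) do the work. I would close by remarking that $I_G$ itself manifestly satisfies all the listed properties, so the natural transformation exists and is unique.
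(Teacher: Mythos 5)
Your strategy is the paper's: pass from $G$ to $G_x$ via naturality along the closed immersion $G\hookrightarrow G_x$, use that $\tilde x + \widetilde{-x}_\can$ lies in $T_pG_x(\mc{O}_C)$ where the hypothesis forces $J_{G_x}$ to equal the Hodge--Tate splitting, and peel off the correction term $\widetilde{-x}_\can$. That outline is correct.

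But there is a genuine gap, and you flag it yourself when you write that $\widetilde{-x}_\can$ is ``not obviously in $T_pG_x$'' and then try to ``restructure.'' The restructured version never closes the hole. You still need to know $J_{G_x}(\widetilde{-x}_\can)=0$ for an \emph{arbitrary} admissible $J$, and the hypotheses on $J$ do not give this through the $T_p$-agreement (since $\widetilde{-x}_\can$ projects to $1\in\mbb{Q}_p$, not $0$, so it is not in $T_pG_x$). Your substitute claim --- that ``$\widetilde{-x}_\can-\tilde x'$ for any two systems of roots lies in $T_pG$'' --- is both misstated (it would have to be $T_pG_x$, and even that fails for roots with different $\mbb{Q}_p$-component) and beside the point: it would only show $J$ is insensitive to the choice of root, not pin down the value $J_{G_x}(\widetilde{-x}_\can)$. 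The clean mechanism, used in the paper, is that $\widetilde{-x}_\can$ lies in $\tilde G_x(\mc{O}_K)$, i.e.\ it is a Galois-fixed element of the fiber product, so by Galois equivariance $J_{G_x}(\widetilde{-x}_\can)$ is a Galois-fixed vector in $\Lie G_x\otimes_{\mc{O}_K}C(1)$, which vanishes by Tate's theorem that $(C(1))^{\Gal(\overline K/K)}=0$. With that one line in place, the rest of your computation --- $J_G(\tilde x)=J_{G_x}(\tilde x)=J_{G_x}(\tilde x+\widetilde{-x}_\can)=I_{G_x}(\tilde x+\widetilde{-x}_\can)=I_{G_x}(\tilde x)=I_G(\tilde x)$ --- goes through as you intended. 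Note also that the \emph{same} Galois-plus-Tate argument is what justifies the easier direction, namely that $I$ itself agrees with the canonical splitting on $T_pG(\mc{O}_C)$ (take $x$ torsion, so $\widetilde{-x}_\can\in V_pG_x(\mc{O}_K)$ and again its image must be the zero Galois-invariant); your proposal asserts this agreement without proof, so the gap appears in both directions.
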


\begin{proof}
We first verify that $I$ satisfies these properties. The Galois equivariance is immediate; to show it agrees with the canonical splitting of the Hodge--Tate filtration on $T_p G(\mc{O}_C)$, note that if $x \in G[p^n](\mc{O}_K)$ then $\widetilde{-x}_\can \in V_p G_x (\mc{O}_K)$ thus its image under the canonical splitting of the Hodge--Tate filtration is zero (because zero is the only Galois invariant vector in $\Lie G \otimes_{\mc{O}_K} C(1)$ by \cite[Theorem 2]{tate:p-divisible-groups}).

Suppose now given another natural transformation $I'$ satisfiying these properties and let $\tilde{x}=(x,x_1,\dots)\in \tilde{G}(\mc{O}_C)\times_{G(\mc{O}_C)}G(\mc{O}_{K})$. Consider the extension
\[ \mc{E}_{x}\colon 0 \rightarrow G[p^{\infty}] \rightarrow G_{x}[p^{\infty}]\rightarrow \mbb{Q}_p/\mbb{Z}_p \rightarrow 0\]
and the induced sequence of universal covers
\[0 \rightarrow \widetilde{G[p^{\infty}]} \rightarrow \widetilde{G_{x}[p^{\infty}]} \rightarrow \mbb{Q}_p \rightarrow 0.\] 
Again by Galois equivariance, $I'_{G_x}(\widetilde{-x}_\can)=0$ and $I_{G_x}(\widetilde{-x}_\can)=0$. Thus 
\[ I'_G(\tilde{x})=I'_{G_x}(\tilde{x})=I'_{G_x}(\tilde{x}+ (\widetilde{-x}_\can))=I_{G_x}(\tilde{x}+(\widetilde{-x}_\can))=I_{G_x}(\tilde{x})=I_{G}(\tilde{x}) \]
where we have used functoriality and the inclusions $\tilde{G} \subset \tilde{G}_x$ and $\Lie G \otimes_{\mc{O}_K} C(1) = \Lie G_x \otimes_{\mc{O}_K} C(1)$ to make sense of the first and last equality, while the middle equality follows since both $I$ and $I'$ agree with the canonical splitting of the Hodge--Tate filtration on $T_p G_x(\mc{O}_C)$ and 
$ \tilde{x}+(\widetilde{-x}_\can) \in T_p G_x(\mc{O}_C)$. 
\end{proof}

\subsection{Construction via Fontaine integration}\label{ss.fontaine-integration}
For $G$ an abelian scheme or $p$-divisible group over $\mc{O}_K$, in \cite{fontaine:integral}, Fontaine gave an explicit construction of the canonical splitting of the Hodge--Tate filtration via ``integration" along elements of $T_p G$. Concretely, when $G$ is an abelian scheme, given  $(x_i) \in T_p G(\mc{O}_{\overline{K}})$ we may pullback any differential $\omega$ on $G$ to obtain 
\[ (x_i^* \omega) \in V_p \Omega_{\mc{O}_{\overline{K}}/\mc{O}_K}=C(1), \]
where the last equality follows from \cite[Th\'eor\`eme $1'$]{fontaine:integral}. 
For abelian schemes, it was observed in \cite{imz} that this definition extends naturally to a map 
\begin{equation}\label{eq.fontaine-integral} I\colon \tilde{G}(\mc{O}_C) \times_{G(\mc{O}_C)} G(\mc{O}_K) \rightarrow \Lie G \otimes_{\mc{O}_K} C(1) \end{equation}
In fact, just as with the extension to $p$-divisible groups in \cite[\S5]{fontaine:integral}, the definition works also for any $p$-adic formal semi-abelian scheme $G/\Spf \mc{O}_K$:~given an element $(x_i)\in \tilde{G}(\mc{O}_C) \times_{G(\mc{O}_C)} G(\mc{O}_K)$, each $x_i$ gives a map $\Spf \mc{O}_{K_i} \rightarrow G$ where $K_i/K$ is a finite extension, thus there is no problem with the pullback of differentials in the formation of the Fontaine integral (the a priori ``issue" we need to circumvent is that one cannot run the argument with arbitrary $\mc{O}_C$-points, but for a $p$-adic formal semi-abelian scheme there is no such thing as an $\mc{O}_{\overline{K}}$-point since $\mc{O}_{\overline{K}}$ is not $p$-adically complete).

The Galois equivariance is immediate from the construction, and the agreement with the canonical splitting of the Hodge--Tate filtration is established in \cite[\S5]{fontaine:integral} (for $p$-divisible groups, which suffices by functoriality). 

\subsection{Construction via crystalline incarnation of the universal cover}\label{ss.deRham-comparison-construction}
\newcommand{\Fil}{\mr{Fil}}
\newcommand{\Gr}{\mr{Gr}}

We first restrict to the case that $G$ is a $p$-divisible group over $\mc{O}_K$. Let $B^+_\crys \subseteq B^+_\dR$, $B_\crys \subseteq B_\dR$ denote the usual Fontaine period rings for $K$, $\theta\colon B^+_\dR \rightarrow C$ the usual map, and $\Fil^iB_\dR=t^i B^+_\dR \subseteq B_\dR$ for $t$ any generator of $\ker \theta$. By Hensel's lemma, $\theta$ is a map of $K$-algebras. 

We write $D$ for the covariant isocrystal of $G$, which is the Dieudonn\'{e} module of $G_\kappa$ with $p$ inverted and Frobenius divided by $p$ (so, e.g., if $G=\mbb{Q}_p/\mbb{Z}_p$, then the Frobenius is $1$, while if $G=\mbb{G}_m$ it is $1/p$). We write $T=T_p G(\mc{O}_C)$ and $V=T[1/p]$. Evaluation of Dieudonn\'{e} crystals on $B_\crys$ induces (by \cite[Corollary 5.1.2]{scholze-weinstein:p-div}) an identification of $\widetilde{G}(\mc{O}_C)$ with $(D \otimes B^+_\crys)^{\varphi=1}$ (note that this differs from the $\varphi=p$ in \cite{scholze-weinstein:p-div} because in our normalization we have divided the Frobenius on $D$ by $p$), and the map
\[ \qlog\colon \widetilde{G}(\mc{O}_C) = (D \otimes B^+_\crys)^{\varphi=1} \rightarrow D \otimes B^+_\dR \xrightarrow{\theta} D_C \]
is the quasi-logarithm of \cite[Definition 3.2.3]{scholze-weinstein:p-div}. We write $\widetilde{\qlog}$ for the first arrow
\[ \widetilde{G}(\mc{O}_C) \rightarrow D\otimes B^+_\dR. \]

\begin{remark}Alternatively, if we interpret the universal cover as the global sections of the corresponding vector bundle on the Fargues--Fontaine curve, then $\qlog$ is the restriction of global sections to the canonical point $\infty_C$ while $\widetilde{\qlog}$ is the restriction to a formal neighborhood of $\infty_C$.
\end{remark}

As in \cite[Lemma 3.2.5]{scholze-weinstein:p-div}, the logarithm 
\[ \log\colon \tilde{G}(\mc{O}_C) \rightarrow G(\mc{O}_C) \rightarrow (\Lie G)_C \]
can be realized as the composition of $\qlog$ with the projection 
\[ D_C \rightarrow \Lie G_C = \Gr^{-1}(D_K)_C = (D_K/\Fil^{0} D_K)_C,\] 
where $\Fil^{0}D_K=\omega_{G^\vee}$ is the non-trivial step in the Hodge filtration on $D_K$ with quotient $\Lie G=\Gr^{-1}(D_K)=D_K/\Fil^{0}D_K$. We thus write 
\[ \widetilde{\log}: \tilde{G}(\mc{O}_C) \times_{G(\mc{O}_C)} G(\mc{O}_K) \rightarrow \Lie G_{B^+_\dR} \]
for the lift of $\log$ given by composing $\widetilde{\qlog}$ with the projection 
\[ D_{B^+_\dR} \rightarrow \Lie G_{B^+_\dR}=(D_K/\Fil^{0} D_K)_{B^+_\dR}.\]

On the other hand, we also have the constant lift $\log \otimes_K B^+_\dR$ coming from the section $K\rightarrow B^+_\dR$ of $\theta$. The difference 
\[ \widetilde{\log} - \log \otimes_K B^+_\dR \]
is a Galois equivariant homomorphism 
\[ f\colon \tilde{G}(\mc{O}_C) \times_{G(\mc{O}_C)} G(\mc{O}_K) \rightarrow \Lie G \otimes_K \Fil^1 B^+_\dR  \]
and we write $\overline{f}$ for the map obtained by quotienting by $\Lie G \otimes_K \Fil^2 B^+_\dR$
\[ \overline{f}\colon \tilde{G}(\mc{O}_C) \times_{G(\mc{O}_C)} G(\mc{O}_K) \rightarrow \Lie G \otimes_{\mc{O}_K} C(1). \]

We claim that $\overline{f}$ agrees on the Tate module $T$ with the canonical splitting of the Hodge--Tate filtration. To see this, first note that $\log$ is identically zero on $T$, so that $\overline{f}_{|T}$ is just the reduction of $\widetilde{\log}$. We need to check that this is the canonical splitting of the Hodge--Tate filtration. To that end, we note that the inclusion
\[ T \hookrightarrow \tilde{G}(\mc{O}_C)=(D \otimes B^+_\crys)^{\varphi=1} \]
is the restriction to $T$ of the crystalline comparison isomorphism
\[ V \otimes B_\crys = D \otimes B_\crys. \]
As with any de Rham representation, the $i$th component of the Hodge--Tate grading 
\[ V \otimes C = \bigoplus_i \Gr^{-i}D_K \otimes_K C(i) \]
is induced by first multiplying $\Fil^{-i} D_K$  with $\Fil^{i}{B}_\dR$ to land in $\Fil^0 (D_{B_\dR}) = V_{ B^+_\dR}$ and then projecting to $V_C$. 
The identification then follows from the following commutative diagram, whose first two rows illustrate the canonical splitting of the Hodge--Tate filtration in the top row and whose bottom row illustrates the definition of $\widetilde{\log}$.
In reading the diagram and comparing with the above, it may be helpful to keep in mind the identifications:
\begin{multline*} \Fil^0(D_K \otimes B_\dR)=V \otimes \Fil^0 B_\dR = V \otimes B^+_\dR,\; \Fil^{-1} D_K=D_K,\\ \Gr^0(D_K)=\Fil^0(D_K)=\omega_{G^\vee} \otimes K, \textrm{ and } \Gr^{-1} D_K=\Lie G \otimes K\end{multline*}
\[\begin{tikzcd}
	{(\Fil^0 D_K)\otimes C} & {V \otimes C} & {(\Gr^{-1}D_K)\otimes C(1)} \\
	{(\Fil^0 D_K)\otimes\Fil^0 B_{\dR}} & {\Fil^0(D_K \otimes B_\dR)} & {(\Fil^{-1}D_K)\otimes \Fil^1 B_\dR} \\
	& T & {(\Gr^{-1} D_K) \otimes \Fil^1 B_\dR} \\
	{(\Fil^0D_K)\otimes B_\dR} & {(\Fil^{-1} D_K) \otimes B_\dR} & {(\Gr^{-1}D_K) \otimes B_\dR} \\
	\\
	& {}
	\arrow[from=2-3, to=1-3]
	\arrow[hook, from=1-1, to=1-2]
	\arrow[hook, from=4-1, to=4-2]
	\arrow[two heads, from=4-2, to=4-3]
	\arrow[hook', from=3-3, to=4-3]
	\arrow[two heads, from=2-3, to=3-3]
	\arrow[bend right=90, two heads, from=3-3, to=1-3]
	\arrow[shift left=1.5, two heads, from=1-2, to=1-3]
	\arrow[hook', from=2-3, to=2-2]
	\arrow[two heads, from=2-2, to=1-2]
	\arrow[hook', from=3-2, to=4-2]
	\arrow[hook', from=3-2, to=2-2]
	\arrow["{\widetilde{\log}}", from=3-2, to=3-3]
	\arrow[hook', shift left=1.5, from=1-3, to=1-2]
	\arrow[hook, from=2-1, to=2-2]
	\arrow[from=2-1, to=1-1]
\end{tikzcd}\]

To extend this definition  to $p$-adic formal semi-abelian schemes $G/\Spf \mc{O}_K$, we may assume $\kappa$ is algebraically closed then use the decomposition 
\[ \tilde{G}(\mc{O}_C) = \widetilde{G^\wedge} (\mc{O}_C) \times \tilde{G}(\kappa) \]
and projection onto $\widetilde{G^\wedge}$ followed by $\overline{f}$. The properties verified above and \cref{theorem.uniqueness} show that this construction recovers $I$. 

\begin{example}When $G=\mu_{p^\infty}$, one obtains the explicit formula
\[ I(q,q^{1/p},q^{1/p^2},\ldots)\left(\frac{dt}{t}\right)=[(q,q^{1/p},q^{1/p^2},\ldots)]-q \in C(1)=\ker \theta / (\ker\theta)^2. \]
 where $q \in 1+\mf{m}_{\overline{K}}$, $(q,q^{1/p},q^{1/p^2},\ldots ) \in (1+ m_{C})^\flat$ is a compatible family of $p$-power roots of $q$, and $[\cdot]$ denotes the multiplicative lift  to $W(\mc{O}_{C^\flat})\subset B^+_\dR$. 
 \end{example}

\begin{remark}\label{remark:continuity}
Although $\widetilde{\log}$ and $\log$ are both continuous on 
\[ \bigcup_{\substack{[K':K]<\infty\\ K'\subset \overline{K}}} \tilde{G}(\mc{O}_C) \times_{G(\mc{O}_C)} G(\mc{O}_{K'}) \]
for the Banach-Colmez topology on $\tilde{G}(\mc{O}_C)$, $\log \otimes B^+_\dR/t^2$ is not continuous  because the canonical section $\overline{K} \hookrightarrow B^+_\dR/t^2$ provided by Hensel's lemma is not continuous for the $p$-adic topology induced by $\overline{K} \subset C$. Because of this, $I$ is not continuous on this set (and this gives another way to explain why it does not extend to $\tilde{G}(\mc{O}_C)$).
As in \cite{imz}, this can be rectified at the level of 
\[ \overline{I}\colon \bigcup_{\substack{[K':K]<\infty\\ K'\subset \overline{K}}}G(\mc{O}_{K'}) \rightarrow (\Lie G \otimes_{\mc{O}_K} C(1))/T_p G, \]
by replacing the $p$-adic topology on the left with a stronger topology induced locally by the inclusion of $\overline{K} \subset B^+_\dR/t^2$ (cf.~\cite[Remark 2.4]{imz}).  
This issue is invisible if we only work with $\mc{O}_K$-points where the two topologies agree. 
\end{remark}

\section{Geometric structure of the codomain of $\overline{I}$}\label{s.geometry}
 In this short section, we  discuss the geometric structure of the codomain of the integration map $\overline{I}$. 
Let $G$ be a $p$-divisible group over $\mathcal{O}_K$. Below we work exclusively with the adic generic fibers as in \cite{scholze-weinstein:p-div} of $G$, $\tilde{G}$, and $\Lie G$.

The universal cover $\tilde{G}$ is an effective Banach--Colmez space and admits a natural geometric structure as a diamond \cite[\S15.2]{scholze:p-adic-geometry}. The projection to $G$ realizes $G=\tilde{G}/T_p G$ (where the equality here is as diamonds),  i.e. it gives a profinite \'{e}tale Scholze--Weinstein uniformization of $G$ by an effective Banach--Colmez space. This uniformization is robust both topologically and geometrically.

Although the conjugate uniformization is neither continuous nor geometric, its codomain does have a natural geometric structure:~consider the maps
\[ \Lie G \otimes_{\mc{O}_K} C(1) \twoheadrightarrow (\Lie G \otimes_{\mc{O}_K} C(1))/T_p G \twoheadrightarrow (\Lie G \otimes_{\mc{O}_K} C(1))/V_p G. \]
The space $\Lie G \otimes_{\mc{O}_K} C(1)$ is as an effective Banach--Colmez space over $K$ (a very non-trivially twisted affine space), while the space $(\Lie G \otimes_{\mc{O}_K} C(1))/V_p G$ is a negative Banach--Colmez space over $K$ in the sense of \cite{fargues-scholze:geometrization}. Thus $(\Lie G \otimes_{\mc{O}_K} C(1))/T_p G$ can  be thought of as either a profinite-\'{e}tale quotient of an effective Banach--Colmez space or as an \'{e}tale cover of a negative Banach--Colmez space; in particular, all three spaces in the diagram are naturally diamonds over $K$.  

\begin{question} We thus have an identification of the rigid analytic points (over $K$) of the diamond $G$ with a subset of the rigid analytic points (over $K$) of the diamond $(\Lie G \otimes_{\mc{O}_K} C(1))/T_p G$ cut out by a $p$-adic Hodge theoretic condition. Are there any higher dimensional rigid analytic subdiamonds of $(\Lie G \otimes_{\mc{O}_K} C(1))/T_p G$? If so, can any be distinguished by conditions in relative $p$-adic Hodge theory and matched with rigid analytic subvarieties of $G$ parameterizing rigidified extensions of $\mbb{Q}_p/\mbb{Z}_p$ by $G$? In some related contexts there are interesting answers to these kinds of questions. For example, the non-miniscule open Schubert cells in $B^+_\dR$-affine Grassmannians are diamonds which are not rigid analytic, but their rigid analytic subvarieties still admit a nice description via $p$-adic Hodge theory --- via the Bialynicki-Birula map of \cite[Proposition 3.4.3]{caraiani-scholze:generic}, they are identified with maps to a flag variety satisfying Griffiths transversality (cf.~\cite[\S6]{scholze:p-adic-ht}). Some related questions in the context of moduli spaces of rigidified Breuil--Kisin--Fargues modules (see \cref{remark.rig-BKF} for the connection to the present work) are treated in \cite{howe-klevdal} (see also \cite{howe:j}). 
\end{question}

\bibliographystyle{plain}
\bibliography{refs}

\end{document}